\documentclass[a4paper,11pt,oneside]{amsart}
\usepackage{amsbsy,amssymb,pstricks,pst-node,mathrsfs,MnSymbol}
\usepackage{mathtools,graphicx}
\usepackage{subfigure,hyperref}
\usepackage[shortlabels]{enumitem}
\usepackage[pagewise]{lineno}
\voffset-0.05in\hoffset -.4in
\setlength{\textwidth}{6in}
\setlength{\textheight}{9in}

\usepackage{tikz}
\usetikzlibrary{positioning}
\newtheorem{theorem}{Theorem}[section]
\newtheorem{thm}[theorem]{Theorem}
\newtheorem{lem}[theorem]{Lemma}

\newtheorem{cor}[theorem]{Corollary}
\newtheorem{example}[theorem]{Example}

\theoremstyle{definition}
\newtheorem{definition}[theorem]{Definition}


%
\textfloatsep.25in
%

\newcommand{\GP}{\Gamma(P)}

\begin{document}

	\title{Cohen-Macauleyness of the Zero-Divisor Graph of a Boolean poset }

\maketitle

\markboth{P. Waghmare and V. Joshi}{Cohen-Macauleyness of the zero-divisor graph of a Boolean poset }\begin{center}\begin{large}Prashant Waghmare$^\text{a}$ and Vinayak Joshi$^\text{b}$\end{large}\\\begin{small}\vskip.1in$^\text{a}$\emph{Department of Mathematics, Maharashtra Udayagiri Mahavidyalaya,\\ Udgir - 41517, Maharashtra, India}\\$^\text{b}$\emph{Department of Mathematics, Savitribai Phule Pune University,\\ Pune - 411007, Maharashtra, India\\E-mail: prashantwagh49@gmail.com (P. Waghmare), vvjoshi@unipune.ac.in (V. Joshi)}\end{small}\end{center}\vskip.2in
	\begin{abstract}
		In this paper, we prove that the zero-divisor graph $\Gamma(P)$ of a Boolean poset $P$ is both well-covered and Cohen--Macaulay. Furthermore, for a poset $\mathbf{P} = \prod_{i=1}^{n} P_i$ $(n \ge 3)$, where each $P_i$ is a finite bounded poset satisfying $Z(P_i) = \{0\}$ for all $i$, and 
		$
		2 \le |P_1| \le |P_2| \le \cdots \le |P_n|,
	$
		we show that the zero-divisor graph $\Gamma(\mathbf{P})$ is Cohen--Macaulay if and only if $\mathbf{P}$ is a Boolean lattice.
		\end{abstract}

\noindent\begin{Small}\textbf{Mathematics Subject Classification (2020)}:  06E20, 13A70, 13C14   \\\textbf{Keywords}: Zero-divisor graph, well-covered, Cohen-Macaulay, Boolean poset. \end{Small}\vskip.2in
\vskip.25in

\section{Introduction}
In recent years, there has been a growing interest in the study of zero-divisor graphs of ordered structures. One of the main reason is that it serve as a tool to study various other graphs associated with algebraic structures, such as the zero-divisor graph of reduced commutative rings, the intersection graph of ideals, the comaximal ideal graph, the nilpotent graph, and the annihilating ideal graph.  

 The concept of a zero-divisor graph was introduced by I. Beck \cite{Be} to color commutative rings with unity. He considered a simple graph $G(R)$ whose vertices are the elements of $R$, such that two distinct elements $x$ and $y$ are adjacent if and only if $xy=0$. D. F. Anderson and P. S. Livingston \cite{alzdg} modified this definition of the zero-divisor graph by changing the vertex set to the set of nonzero zero-divisors and adjacency remains same.

 In \cite{HM}, R. Hala\v{s} and M. Jukl  introduced the \emph{zero-divisor graph} of a poset with the least element $0$ by considering elements of a poset are the vertices, and there is an edge between two vertices $x$, $y$, if $0$ is the only element lying below $x$ and $y$. They have revealed the similar results of the zero-divisor graph of a poset as Beck did in \cite{Be}.
 
 The definition of the \emph{zero-divisor graph} of a poset, originally introduced in \cite{HM} and later modified by Lu and Wu in \cite{LWP} (see also Joshi \cite{joshi}), is stated below.
 
\begin{definition}
 Let $P$ be a poset with $0$. Define a \emph{zero-divisor} of $P$ to be any element of the set  $Z(P)=\{a\in P$ $|$ there exists $b\in P\setminus\{0\}$ such that $\{a,b\}^\ell=\{0\}\}$. The \emph{zero-divisor graph} of $P$ is the graph $\Gamma(P)$ whose vertices are the elements of $Z(P)\setminus\{0\}$ such that two vertices $a$ and $b$ are adjacent if and only if $\{a,b\}^\ell=\{0\}$, where $\{a,b\}^\ell=\{x\in P\ | \ x \leq a, b\}$.
\end{definition}

In \cite{RHV1}, R.H. Villarreal studied the ideal associated with graphs and named it as an edge ideal. He discovered the class of \emph{Cohen-Macaulay graphs}. As in combinatorial commutative algebra, \emph{Cohen-Macaulay} ring plays a vital role, hence it is important to find a class of simplicial complexes whose Stanley-Reisnar ring (Face Ring) is Cohen-Macaulay. As for a given graph  $\Gamma$, we have the independence complex of $\Gamma.$ A graph $\Gamma$ is said to be \emph{Cohen-Macaulay}, \emph{vertex decomposable}, \emph{Well-covered/Unmixed/Pure} and \emph{Gorenstein} if the independence complex of $\Gamma$ have that property. 
 
In \cite{LW}, A-Ming Liu and Tongsuo Wu proved that Boolean graphs are Cohen–Macaulay.   
In a similar direction, first part of this paper identifies a class of zero-divisor graphs of a Boolean poset possess the Cohen–Macaulay property, and thus generalizes the result of A-Ming Liu and Tongsuo Wu. 

\begin{thm}
If $P$ is a Boolean poset, then $\Gamma(P)$ is Cohen–Macaulay.
\end{thm}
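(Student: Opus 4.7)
The plan is to establish the Cohen--Macaulay property of $\Gamma(P)$ by proving the stronger statement that its independence complex $\Delta := \mathrm{Ind}(\Gamma(P))$ is vertex decomposable, a property that implies pure shellability and, in turn, Cohen--Macaulayness. This reduces the ring-theoretic question to a combinatorial induction on the atomic structure of $P$.

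I would first exploit the Boolean structure of $P$. Let $\mathrm{At}(P) = \{a_1, \ldots, a_n\}$ denote the atoms of $P$, and for each nonzero $x \in P$ set $\sigma(x) = \{a_i : a_i \leq x\}$. In a Boolean poset every nonzero element has a nonempty support, and two vertices $x,y$ of $\Gamma(P)$ satisfy $\{x,y\}^\ell = \{0\}$ precisely when $\sigma(x) \cap \sigma(y) = \emptyset$. Consequently, the faces of $\Delta$ are exactly those collections of vertices whose supports pairwise intersect. For each atom $a_i$, the set $M_i = \{x \in V(\Gamma(P)) : a_i \in \sigma(x)\}$ is an independent set in $\Gamma(P)$. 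The well-covered part of the theorem should follow by proving that every facet of $\Delta$ equals $M_i$ for some $i$ --- a Helly-type statement for supports in a Boolean poset --- together with the Boolean symmetry forcing $|M_i| = |M_j|$ for all $i, j$.

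For vertex decomposability I would argue by induction on $n$. The base case $n = 2$ reduces to a complete bipartite-like graph whose independence complex is a disjoint union of two simplices and is trivially vertex decomposable. For the inductive step, the natural candidate for a shedding vertex is an atom $a_n$ itself (or any element with singleton support $\{a_n\}$). The link $\mathrm{lk}_\Delta(a_n)$ is the full simplex on $M_n \setminus \{a_n\}$ and is vertex decomposable. The deletion $\Delta \setminus a_n$ is the independence complex of $\Gamma(P) - a_n$: vertices $x$ with $a_n \notin \sigma(x)$ are controlled by the inductive hypothesis applied to the sub-Boolean-poset generated by $\{a_1, \ldots, a_{n-1}\}$, while vertices $x$ with $a_n \in \sigma(x)$ and $|\sigma(x)| \geq 2$ act as twins of suitable surviving vertices and can be absorbed by the standard principle that attaching a twin preserves vertex decomposability.

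The principal obstacle lies in verifying the shedding condition: one must show that no facet of $\Delta \setminus a_n$ fails to extend to a facet of $\Delta$, equivalently that removing $a_n$ does not strictly decrease the size of any maximal independent set containing $a_n$. This rests entirely on the Helly-type characterization of facets as stars $M_i$ and on the Boolean symmetry guaranteeing $|M_i|$ is independent of $i$; both are delicate points that use in an essential way the full atomic structure of the Boolean poset rather than only combinatorial properties of the graph. Assembling vertex decomposability of the link with that of the deletion then yields vertex decomposability of $\Delta$, and hence the Cohen--Macaulayness of $\Gamma(P)$.
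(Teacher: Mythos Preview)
Your central claim---that every facet of $\Delta$ equals some star $M_i$---is false, and both your well-coveredness argument and your shedding verification rest on it. Already in the Boolean lattice $2^{[3]}$, the family $\bigl\{\{1,2\},\{1,3\},\{2,3\}\bigr\}$ is a maximal pairwise-intersecting collection of proper nonempty subsets, hence a facet of $\Delta$, yet no atom lies below all three members. There is no Helly phenomenon here: pairwise-intersecting families of subsets of an $n$-set need not have a common point once $n\geq 3$. Well-coveredness of $\Gamma(P)$ is true, but it cannot be read off from the facet list in the way you propose.

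Your treatment of the deletion is also too loose to carry the induction. A vertex $x$ with $a_n\in\sigma(x)$ and $|\sigma(x)|\geq 2$ is \emph{not} a twin of an element with support $\sigma(x)\setminus\{a_n\}$: after deleting $a_n$, the neighbours of $x$ are those $y$ with $\sigma(y)\cap\sigma(x)=\emptyset$, whereas an element $z$ with $\sigma(z)=\sigma(x)\setminus\{a_n\}$ has strictly more neighbours (namely any $y$ with $a_n\in\sigma(y)$ but $\sigma(y)\cap\sigma(z)=\emptyset$). So the twin-absorption step fails, and $\Gamma(P)-a_n$ is not the zero-divisor graph of a Boolean poset on fewer atoms. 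For comparison, the paper avoids facet enumeration entirely: it proves very well-coveredness by a direct complement-pairing argument (any independent set missing a pair $\{a,a'\}$ can absorb one of them), and then verifies the five labelling conditions of Mahmoudi--Mousivand--Crupi--Rinaldo--Terai--Yassemi characterizing Cohen--Macaulay very well-covered graphs, ordering the vertices by the weight $\operatorname{wt}(x)=|\sigma(x)|$.
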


In the later part, we have proved the converse of the above theorem for a special class of posets. This result generalizes Main Theorem of Asir et al. \cite{asir} 
\begin{thm}\label{converse thm}
	Let $ \mathbf{ P}=\prod\limits_{i=1}^{n}P_i$, $(n\geq 3)$, where $P_i$'s are  finite bounded posets such that $Z(P_i)=\{0\}$, $\forall i$ and $2\leq|P_1|\leq|P_2|\leq\dots\leq|P_n|$. Then the following statements are equivalent.
	\begin{enumerate}
		\item $\Gamma(\mathbf{P})$ is Cohen-Macaulay.
		\item $\Gamma(\mathbf{P})$ is well-covered.
		\item  $|P_i|=2$ for all $i=1,2,\ldots,n$.
		\item $\mathbf{P}$ is a Boolean  lattice.
		\item $\mathbf{P}$ is a Boolean  poset.
	\end{enumerate}  
\end{thm}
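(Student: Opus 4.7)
The strategy is to establish the cycle $(1) \Rightarrow (2) \Rightarrow (3) \Rightarrow (4) \Rightarrow (5) \Rightarrow (1)$. The implication $(1) \Rightarrow (2)$ follows from the general fact that every Cohen--Macaulay simplicial complex is pure, hence every Cohen--Macaulay graph is well-covered. For $(3) \Rightarrow (4)$, a bounded poset of cardinality two must be the two-element chain, so $\mathbf{P}$ is the Boolean lattice $\mathbf{2}^n$; $(4) \Rightarrow (5)$ is by definition; and $(5) \Rightarrow (1)$ is precisely the first theorem of the paper. Hence the only real content is in $(2) \Rightarrow (3)$.

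For $(2) \Rightarrow (3)$, the first step is a support-based description of $\Gamma(\mathbf{P})$. For $\mathbf{a} = (a_1,\dots,a_n) \in \mathbf{P}$ set $\mathrm{supp}(\mathbf{a}) = \{i : a_i \ne 0\}$. Since $\{\mathbf{a}, \mathbf{b}\}^{\ell} = \prod_i \{a_i, b_i\}_{P_i}^{\ell}$ and the hypothesis $Z(P_i) = \{0\}$ forbids any two non-zero elements of $P_i$ from meeting at $\{0\}$, the relation $\{\mathbf{a}, \mathbf{b}\}^{\ell} = \{\mathbf{0}\}$ is equivalent to $\mathrm{supp}(\mathbf{a}) \cap \mathrm{supp}(\mathbf{b}) = \emptyset$. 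Similarly, a non-zero $\mathbf{a}$ lies in $Z(\mathbf{P})$ precisely when $\mathrm{supp}(\mathbf{a})$ is a proper non-empty subset of $[n]$. A short argument then identifies each maximal independent set $I$ of $\Gamma(\mathbf{P})$ with a maximal intersecting family $\mathcal{F}(I) = \{\mathrm{supp}(\mathbf{v}) : \mathbf{v} \in I\}$ of proper non-empty subsets of $[n]$, and $I$ consists of all vertices whose support lies in $\mathcal{F}(I)$. Writing $p_i := |P_i|-1 \ge 1$, this gives $|I| = \sum_{S \in \mathcal{F}(I)} \prod_{i \in S} p_i$.

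Well-coveredness then becomes a system of polynomial identities, of which two instances suffice. The star $\mathcal{F}_j := \{S \subsetneq [n] : j \in S\}$ is a maximal intersecting family of size $|I_{\mathcal{F}_j}| = p_j \prod_{k\ne j}(1+p_k) - \prod_{k=1}^n p_k$; equating this for two distinct indices $i, j$, the symmetric product $\prod_k p_k$ cancels, yielding $p_i/(1+p_i) = p_j/(1+p_j)$, hence $p_1 = \cdots = p_n =: p$. Next, the \emph{majority family} $\mathcal{M} := \{S \subsetneq [n] : |S \cap \{1,2,3\}| \ge 2\}$, which makes sense because $n \ge 3$, is intersecting by pigeonhole and is easily checked to be maximal. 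A direct count, separating the cases $|S \cap \{1,2,3\}| = 2$ and $= 3$, yields $|I_{\mathcal{M}}| = p^{2}(p+3)(1+p)^{n-3} - p^{n}$. Setting $|I_{\mathcal{F}_1}| = |I_{\mathcal{M}}|$ and dividing through by $p(1+p)^{n-3}$ collapses to $(1+p)^{2} = p(p+3)$, which forces $p = 1$. Hence $|P_i| = 2$ for every $i$, establishing (3).

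The main obstacle is the combinatorial identification of maximal independent sets of $\Gamma(\mathbf{P})$ with maximal intersecting families of proper non-empty subsets of $[n]$: one must verify both that a maximal independent set necessarily contains \emph{every} vertex whose support lies in $\mathcal{F}(I)$, and that $\mathcal{F}(I)$ itself cannot be enlarged as an intersecting family. Once this bijection is in place, the remaining steps are routine binomial manipulations.
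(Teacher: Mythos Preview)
Your proof is correct and follows essentially the same route as the paper: the cycle $(1)\Rightarrow(2)\Rightarrow(3)\Rightarrow(4)\Rightarrow(5)\Rightarrow(1)$ is identical, and for the substantive step $(2)\Rightarrow(3)$ you use exactly the two maximal independent sets the paper uses---the ``star'' $\mathcal{F}_j$ is the paper's $\mathfrak{J}_j=\{\mathbf{q}_j\}^u\setminus D$, and your ``majority'' family $\mathcal{M}$ is the paper's $\mathfrak{J}_{1,2,3}$ from Lemma~\ref{MIS}---together with the same counting and the same final equation. Your support/intersecting-family language is a slightly cleaner packaging of what the paper does directly in poset notation, but the argument is the same.
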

\section{Preliminaries}

\subsection{Combinatorial Commutative Algebra and Graph Theory:}
A \textit{simplicial complex} $\Delta$ on a set $V$ is a collection of subsets of $V$ such that for every $B \in \Delta$ and every $A \subseteq B$, we also have $A \in \Delta$. The elements of $\Delta$ are called \textit{faces}. The set $V$ is called the \textit{vertex set} of $\Delta$, and its elements are called the \textit{vertices}. 

A face $F$ of $\Delta$ is called a \textit{facet} if there is no face $B \in \Delta$ such that $F \subset B$. The set of all facets of $\Delta$ is denoted by $\mathcal{F}(\Delta)$. The complex $\Delta$ is called a \textit{simplex} if it has exactly one facet.

The \textit{dimension} of a face $A$ is defined as $\operatorname{dim}(A)=|A|-1$. The \textit{dimension} of the simplicial complex $\Delta$ is the maximum dimension of any of its faces and is denoted by $\operatorname{dim}(\Delta)$. A simplicial complex $\Delta$ is called \textit{pure} if all facets of $\Delta$ have the same dimension; otherwise, it is called \textit{nonpure}. A pure simplicial complex is also known as \textit{well-covered}.

\vspace{2mm}

Hereafter, we consider the simplicial complex of independent vertex sets of a graph $\Gamma$, denoted by $\Delta_{\Gamma}$. The faces of $\Delta_{\Gamma}$ are precisely the independent sets of $\Gamma$, and the facets of $\Delta_{\Gamma}$ are the maximal independent sets of $\Gamma$.

Let $S=\mathbb{K}[x_1,\ldots,x_n]$ be the polynomial ring in $n$ variables over a field $\mathbb{K}$, and let $\Delta$ be a simplicial complex on the vertex set $[n]=\{1, 2, \cdots, n\}$. For a subset $F \subseteq [n]$, denote $x_F = \prod_{i \in F} x_i$.

The \textit{Stanley--Reisner ideal} of $\Delta$ over $\mathbb{K}$ is the ideal $I_\Delta \subseteq S$ generated by all squarefree monomials $x_F$ corresponding to subsets $F \notin \Delta$. The corresponding \textit{Stanley--Reisner ring} is defined as
$
\mathbb{K}[\Delta] = S / I_\Delta.
$

A simplicial complex $\Delta$ is said to be \textit{Cohen--Macaulay} (respectively, \textit{Gorenstein}) over $\mathbb{K}$ if its Stanley--Reisner ring $\mathbb{K}[\Delta]$ is a \textit{Cohen--Macaulay ring} (respectively, a \textit{Gorenstein ring}).

Let $\Gamma=(V,E)$ be a finite simple graph with the vertex set $V$. 
The \emph{edge ideal} of $\Gamma$, denoted by $I(\Gamma)$, is the squarefree monomial ideal in the polynomial ring $S=\mathbb{K}[x_1,\ldots,x_n]$ defined as
$
I(\Gamma)=\big( x_i x_j \; : \; \{i,j\} \in E \big).
$
That is, each edge $\{i,j\}$ of the graph corresponds to a quadratic squarefree monomial $x_i x_j$, and the edge ideal is generated by all such monomials. 

The connection with Stanley-Reisner theory arises through the independence complex of $\Gamma$. 
The independence complex, denoted by $\Delta_{\Gamma}$, is the simplicial complex whose faces are the independent sets of $\Gamma$. 
Since the minimal nonfaces of $\Delta_{\Gamma}$ are precisely the edges of $\Gamma$, the Stanley--Reisner ideal of $\Delta_{\Gamma}$ coincides with the edge ideal of $\Gamma$, i.e.,
$
I_{\Delta_{\Gamma}} = I(\Gamma).
$
Consequently, the Stanley--Reisner ring of the independence complex is given by
$
\mathbb{K}[\Delta_{\Gamma}] = S/I(\Gamma).
$

A subset $C$ of the vertex set $V(\Gamma)$ of a graph $\Gamma,$ is called a \textit{vertex cover} of a graph $\Gamma$ if for every $v\in V(\Gamma) $  there exist at least one vertex $u\in C$ such that $v$ is adjacent to $u$ in $\Gamma.$  A vertex cover $C$ of graph $\Gamma$ is called \textit{minimal} if there does not exists any cover of $\Gamma$ which is inside $C.$ A subset $I$ of the vertex set $V(\Gamma) $ is called an \textit{independent set} if the induced subgraph of $\Gamma$ on $I$ is an empty graph (graph without edges). An independent set $I$  is called \textit{maximal} if for any $v\in V(\Gamma)\setminus I$ the induced subgraph on $I\cup \{v\}$ is not an empty graph. Note that the set of independent sets of $\Gamma$ forms a simplicial complex. 

  \begin{definition}
	A graph $\Gamma$ is said to be \emph{well-covered} if all its maximal independent sets have the same cardinality. \emph{Well-covered} graphs are also called as \textit{pure}. 

	Further, $\Gamma$ is called \emph{very well–covered} if it is well–covered, has no isolated vertices, and the number of vertices of $\Gamma$ is exactly twice the size of a maximal independent set. 
\end{definition}

Let $\Gamma$ be a graph and let $v \in V(\Gamma)$. A vertex $w \in V(\Gamma)$ is called a \textit{complement} of $v$ in $\Gamma$, if $v$ is adjacent to $w$, and no vertex is adjacent to both $v$ and $w$, i.e., the edge $v-w$ is not an edge of any triangle in $\Gamma$. In such a case, we write $v \perp w$. Moreover, we say that $\Gamma$ is \textit{complemented} if every vertex has a complement. An \textit{end} is a vertex that is adjacent to precisely one other vertex. For more details, please refer, \cite{VA, LW}.

\subsection{Basics of Poset Theory }
We proceed with the following basics and necessary terminologies given by Devhare, Joshi and
LaGrange in \cite{djl}.  Also, we mention some results that we have used as tools to prove the main results of this paper.

 Let $P$ be a poset. For any subset $A \subseteq P$, the \textit{upper cone} of $A$ is defined as  
 $A^{u} = \{\, b \in P \mid b \ge a \text{ for all } a \in A \,\}$,  
 and the \textit{lower cone} of $A$ is given by  
 $A^{\ell} = \{\, b \in P \mid b \le a \text{ for all } a \in A \,\}$.  
 For a single element $a \in P$, we write $a^{u}$ and $a^{\ell}$ instead of $\{a\}^{u}$ and $\{a\}^{\ell}$, respectively.  
 The notation $A^{u\ell}$ means $(A^{u})^{\ell}$, and similarly one defines $A^{\ell u}$.
 
 An element $a \in P$ is called an \textit{atom} if $a > 0$ and there is no $b \in P$ satisfying $0 < b < a$.  
 The poset $P$ is said to be \textit{atomic} if each nonzero element $b \in P \setminus \{0\}$ contains an atom $a \in P$ with $a \le b$.
 
 A poset $P$ is \textit{bounded} if it has both a least element $0$ and a greatest element $1$.  
 In a bounded poset, an element $a' \in P$ is called a \textit{complement} of $a \in P$ when  
 $\{a,a'\}^{\ell} = \{0\}$ and $\{a,a'\}^{u} = \{1\}$.  
 An element $b \in P$ is a \textit{pseudocomplement} of $a$ if $\{a,b\}^{\ell} = \{0\}$ and $b$ is maximum with this property; that is, $b$ is the pseudocomplement of $a$ exactly when $a^{\perp} = b^{\ell}$, where  
 $a^{\perp} = \{\, x \in P \mid \{x,a\}^{\ell} = \{0\} \,\}$.  
 Pseudocomplements, when they exist, are unique; the pseudocomplement of $a$ is denoted by $a^{*}$.
 
 A bounded poset is called \textit{complemented} (respectively, \textit{pseudocomplemented}) if every element has a complement $a'$ (respectively, a pseudocomplement $a^{*}$).
 
 A poset $P$ is said to be \textit{distributive} if for all $a,b,c \in P$,  
 the equality $\{ \{a\} \cup \{b,c\}^{u} \}^{\ell} = \{ \{a,b\}^{\ell} \cup \{a,c\}^{\ell} \}^{u\ell}$ holds.  
 This notion extends the classical definition of distributivity in lattices: a lattice is distributive in the usual sense if and only if it is distributive as a poset.  
 A bounded poset that is both distributive and complemented is called \textit{Boolean}; see \cite{RH}.  
 Every Boolean algebra is therefore a Boolean poset, though the converse does not necessarily hold.
 
 It is a standard fact that in a Boolean poset, complementation coincides with pseudocomplementation (cf. \cite{VA}, Lemma 2.4).  
 Thus, if $P$ is Boolean, it is pseudocomplemented, and each $x \in P$ has a unique complement $x'$.  
 For any  undefined terminology, we refer the reader to \cite{djl, Gratzer,west}.
 
Next, we present several results on Cohen-Macaulay graphs and the zero-divisor graph of a Boolean poset $P$, which will be essential in proving our main results.

\begin{lem} [{\cite[Lemma 3.1]{MY}}] \label{MY}
	Let $\Gamma$ be a very well-covered graph with $2 h$ vertices. Then the following conditions are equivalent:
	\begin{enumerate}
		\item The graph $\Gamma$ is Cohen-Macaulay.
		\item There is a relabeling of vertices
	 $V(\Gamma)=\left\{x_1, \ldots, x_h, y_1, \ldots, y_h\right\}$ such that the following five conditions hold:
	 \begin{enumerate}
	 	\item $\left\{x_1, \ldots, x_h\right\}$ is a minimal vertex cover of $\Gamma$ and $\left\{y_1, \ldots, y_h\right\}$ is a maximal independent set of $\Gamma$,
	 	\item $\left\{x_1, y_1\right\}, \ldots,\left\{x_h, y_h\right\} \in E(\Gamma)$,
	 	\item if $\left\{z_i, x_j\right\},\left\{y_j, x_k\right\} \in E(\Gamma)$, then $\left\{z_i, x_k\right\} \in E(\Gamma)$ for distinct $i, j, k$ and for $z_i \in\left\{x_i, y_i\right\}$,
	 	\item if $\left\{x_i, y_j\right\} \in E(\Gamma)$, then $\left\{x_i, x_j\right\} \notin E(\Gamma)$,
	 	\item if $\left\{x_i, y_j\right\} \in E(\Gamma)$, then $i \leq j$.
	 \end{enumerate}
\end{enumerate}

\end{lem}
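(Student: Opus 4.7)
The plan is to prove the two implications of Lemma \ref{MY} separately, modeled on the Herzog--Hibi characterization of Cohen--Macaulay bipartite graphs, which the five conditions closely resemble.

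For $(2)\Rightarrow(1)$, I would show that the independence complex $\Delta_\Gamma$ is pure and shellable. Purity is immediate from the very well-covered hypothesis. For shellability, observe that by (a), (b) and purity, every facet of $\Delta_\Gamma$ is obtained by choosing, for each $i\in[h]$, exactly one of $\{x_i,y_i\}$, subject to the incidence constraints coming from $E(\Gamma)$. I would order the facets lexicographically by the characteristic vector recording which $y_i$'s appear. To check the shelling property, for any two facets $F<F'$ I need to produce a facet $F''<F'$ such that $F''\cap F'$ has codimension one in $F'$ and $F\cap F'\subseteq F''\cap F'$; conditions (c), (d), (e) provide precisely the exchange moves needed to swap an $x_i$ for $y_i$ at an appropriate index. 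Since pure shellable complexes are Cohen--Macaulay, this gives (1).

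For $(1)\Rightarrow(2)$, I would first invoke the structure theorem of Favaron for very well-covered graphs: $\Gamma$ admits a perfect matching $\{e_1,\ldots,e_h\}$ in which one endpoint of each $e_i$ belongs to every minimum vertex cover while the other belongs to some maximum independent set. Writing $e_i=\{x_i,y_i\}$ with $x_i$ on the cover side and $y_i$ on the independent-set side yields (a) and (b) at once. Condition (d) would then follow from the very well-covered property by a direct swapping argument: the existence of both edges $\{x_i,y_j\}$ and $\{x_i,x_j\}$ would permit two maximal independent sets of different sizes, contradicting purity. Condition (c) is an algebraic consequence of the Cohen--Macaulay hypothesis obtained by examining the link of $\{y_j\}$ in $\Delta_\Gamma$ and invoking Reisner's criterion on reduced homology of that link.

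The remaining, and most delicate, step is to achieve the ordering condition (e). I would define a relation by declaring $i\prec j$ whenever $\{x_i,y_j\}\in E(\Gamma)$ with $i\neq j$, and then relabel the pairs via a linear extension of its transitive closure. The main obstacle is ruling out cycles in this relation: a cycle $i_1\prec i_2\prec\cdots\prec i_k\prec i_1$ would exhibit a minimal non-face of $\Delta_\Gamma$ whose link has nontrivial reduced homology in a low dimension, contradicting Reisner's criterion and hence the Cohen--Macaulay hypothesis. Translating the combinatorial cycle into an explicit topological witness against Cohen--Macaulayness is the technical heart of the proof; conditions (c) and (d), already established, are the combinatorial inputs that make this translation possible, and once acyclicity is secured the relabeling simultaneously preserves (a)--(d) and enforces (e).
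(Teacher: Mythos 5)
The paper does not prove this lemma at all: it is quoted verbatim from \cite[Lemma 3.1]{MY} (where it is in turn adapted from Crupi--Rinaldo--Terai's characterization of Cohen--Macaulay very well-covered graphs) and used as a black box, so there is no in-paper argument to compare yours against. Judged on its own terms, your outline does follow the general route of the literature proof --- Favaron's structure theorem supplies the perfect matching and the cover/independent-set split, and the ordering condition (e) is indeed the part genuinely tied to Cohen--Macaulayness --- but it contains two substantive gaps and one misattribution.

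First, both decisive steps are announced rather than carried out. In $(2)\Rightarrow(1)$ you assert that ordering the facets lexicographically yields a shelling and that conditions (c)--(e) ``provide precisely the exchange moves needed,'' but you never exhibit the facet $F''$ demanded by the shelling condition; verifying its existence is exactly where (c) and (e) must be used nontrivially, and without that verification the implication is unproved. In $(1)\Rightarrow(2)$ you yourself label the conversion of a cycle $i_1\prec i_2\prec\cdots\prec i_k\prec i_1$ into a homological obstruction to Cohen--Macaulayness as ``the technical heart'' and then do not supply it; a proof that defers its hardest step is a roadmap, not a proof. Second, your claim that condition (c) is ``an algebraic consequence of the Cohen--Macaulay hypothesis'' obtained from Reisner's criterion is misplaced: conditions (b), (c), (d) belong to Favaron's purely combinatorial characterization of well-coveredness for graphs with a perfect matching and hold for every unmixed very well-covered graph, Cohen--Macaulay or not; only (e) requires the Cohen--Macaulay hypothesis. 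Since the present paper only cites the result, none of this affects the paper, but as a standalone proof attempt the proposal is incomplete at its two critical junctures.
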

\begin{thm}[{\cite[Theorem 2.4]{VA}}] \label{VA Theorem}
	If $P$ be a Boolean poset, then every vertex of $\GP$ has  the unique  complement in $\GP.$
\end{thm}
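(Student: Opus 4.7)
The plan is to prove that the unique graph-theoretic complement in $\GP$ of a vertex $v$ coincides with its unique poset complement $v'$ guaranteed by the Boolean structure. The only tools required are those already recorded in the preliminaries: every $x \in P \setminus \{0,1\}$ has a unique complement $x'$; complementation coincides with pseudocomplementation in a Boolean poset, so $x^\perp = (x')^\ell$; and $x \mapsto x'$ is an order-reversing involution on $P$.

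First I would establish existence, i.e., $v \perp v'$ in $\GP$. Since $v$ is a vertex, $v \ne 0$, and as a zero-divisor $v \ne 1$, so the complement $v'$ also lies in $P \setminus \{0,1\}$ and is itself a vertex of $\GP$; the defining identity $\{v,v'\}^\ell = \{0\}$ gives adjacency. To rule out a common neighbour $u$, the conditions $\{u,v\}^\ell = \{0\}$ and $\{u,v'\}^\ell = \{0\}$ translate, via $x^\perp = (x')^\ell$, into $u \le v'$ and $u \le (v')' = v$, whence $u \in \{v,v'\}^\ell = \{0\}$, contradicting $u \ne 0$. Hence $v \perp v'$.

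For uniqueness, suppose $w$ is any graph-theoretic complement of $v$ in $\GP$. Adjacency forces $w \le v'$. Assume for contradiction that $w < v'$; applying the order-reversing involution yields $v < w'$. The decisive claim is $\{v',w'\}^\ell \ne \{0\}$: otherwise $v'$ and $w'$ would be adjacent in $\GP$, which gives $w' \le (v')' = v$, hence $v' \le w$ by order-reversal, contradicting $w < v'$. Picking any nonzero $u \in \{v',w'\}^\ell$, the inequalities $u \le v'$ and $u \le w'$ make $u$ adjacent to both $v$ and $w$; moreover $u \ne v$ (else $v \le v'$ forces $v \in \{v,v'\}^\ell = \{0\}$) and $u \ne w$ (else $w \le w' = w^*$ forces $w = 0$). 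Thus $u$ is a common neighbour of $v$ and $w$, contradicting $v \perp w$, so $w = v'$. The only genuinely substantive step is the nonemptiness claim $\{v',w'\}^\ell \ne \{0\}$; everything else is bookkeeping with the involution $x \mapsto x'$ and the identity $x^\perp = (x')^\ell$.
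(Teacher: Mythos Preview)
The paper does not actually supply a proof of this statement: it is quoted verbatim as \cite[Theorem 2.4]{VA} and used as an input to later arguments, so there is nothing in the paper to compare your proposal against.

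That said, your argument is correct and self-contained. The existence half is straightforward, and your uniqueness argument is sound: from $w<v'$ you correctly deduce $v<w'$ via the order-reversing involution (which follows from $a\le b\Rightarrow b^*\le a^*$ together with $x'=x^*$ and $x''=x$), and the key step $\{v',w'\}^\ell\neq\{0\}$ is exactly the right observation, since otherwise $w'\le v$ forces $v'\le w$. The verification that the resulting $u$ is a genuine vertex distinct from $v$ and $w$ is also handled cleanly. One cosmetic point: when you say ``$v'$ and $w'$ would be adjacent in $\Gamma(P)$'' you are really only using the poset condition $\{v',w'\}^\ell=\{0\}$, so the phrasing is slightly imprecise (though harmless, since $v',w'\in P\setminus\{0,1\}=V(\Gamma(P))$ anyway).
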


\begin{lem}[{\cite[Lemma 2.2]{VA}}] \label{VA Lemma}
Let $P$ be a Boolean poset. Then $(1\neq ) b$ is an atom if and only if
its complement $b'$ is the unique end adjacent to $b$ in $\GP$.
\end{lem}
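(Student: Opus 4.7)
The plan is to reduce both directions of the lemma to a single auxiliary characterization: \emph{a vertex $d$ of $\GP$ is an end if and only if its complement $d'$ is an atom of $P$.} Once this is in hand, the forward direction follows from $(b')'=b$ together with a short uniqueness argument, and the backward direction is immediate.

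To prove the auxiliary characterization, I would first describe the neighborhood of a vertex $d\in\GP$. Because $P$ is Boolean, complementation coincides with pseudocomplementation (as recalled in the preliminaries), so $d^{\perp}=(d')^{\ell}$, and hence for every $c\in P$ we have $\{d,c\}^{\ell}=\{0\}$ iff $c\le d'$. Moreover every nonzero $c\le d'$ is a nonzero zero-divisor (witnessed by $d$) and is not equal to $1$, so the neighbors of $d$ in $\GP$ are exactly $(d')^{\ell}\setminus\{0\}$. This set is a singleton iff it equals $\{d'\}$, iff the only nonzero element below $d'$ is $d'$ itself, iff $d'$ is an atom.

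For the forward direction, assume $b$ is an atom with $b\ne 1$. Then $b'\ne 0,1$ is a vertex, and since $(b')'=b$ is an atom, the auxiliary characterization gives that $b'$ is an end; it is adjacent to $b$ because $\{b,b'\}^{\ell}=\{0\}$. For uniqueness, suppose $d$ is any end adjacent to $b$. Applying the neighborhood description to $b$ gives $d\le b'$; applying the auxiliary characterization to $d$ gives that $d'$ is an atom. Since the complementation map is order-reversing in a Boolean poset, $d\le b'$ forces $b\le d'$, and because both $b$ and $d'$ are atoms we deduce $b=d'$, i.e., $d=b'$. For the backward direction, if $b'$ is the unique end adjacent to $b$ then in particular $b'$ is an end, so the auxiliary characterization applied to $b'$ yields that $(b')'=b$ is an atom.

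The main delicacy lies in handling the \emph{poset} (as opposed to lattice) setting correctly: the two supporting facts, namely that complement equals pseudocomplement and that $x\mapsto x'$ is order-reversing, are standard features of Boolean posets and are already recorded in the preliminaries, so invoking them lets the rest of the argument proceed by entirely routine verification.
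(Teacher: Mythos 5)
The paper does not prove this lemma at all: it is imported verbatim as \cite[Lemma~2.2]{VA} and used as a black box, so there is no in-paper argument to compare yours against. Judged on its own, your proof is correct and self-contained. The neighborhood description is the right engine: since complement equals pseudocomplement in a Boolean poset, $d^{\perp}=(d')^{\ell}$, and every nonzero $c\le d'$ is a zero-divisor distinct from $d$ (if $c=d$ then $d\in\{d,d'\}^{\ell}$, a contradiction), so $N(d)=(d')^{\ell}\setminus\{0\}$ and this is a singleton exactly when $d'$ is an atom. Both directions and the uniqueness step then go through as you describe. One small bookkeeping point: the preliminaries record only that complementation coincides with pseudocomplementation, not that $x\mapsto x'$ is order-reversing; you should supply the one-line verification ($d\le b'$ gives $\{d,b\}^{\ell}\subseteq\{b',b\}^{\ell}=\{0\}$, hence $b\in d^{\perp}=(d')^{\ell}$, i.e.\ $b\le d'$) rather than cite it as recorded. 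With that inserted, the argument is complete and would serve as a legitimate proof of the quoted lemma.
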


\section{Cohen-Macaulayness of the zero-divisor graph of a Boolean Poset }
It is well known that every \textit{Cohen--Macaulay graph} is necessarily \textit{unmixed}; see, for example, \cite{RPS, RHV}. Consequently, in order to investigate the Cohen--Macaulayness of the zero-divisor graph $\Gamma(P)$, it is essential first to determine whether $\Gamma(P)$ is \textit{well-covered}. 
The following theorem provides the well-coveredness of $\Gamma(P)$ in the case $P$ is a Boolean poset.

	\textbf{Throughout the paper, unless otherwise specified, $P$ denotes a finite Boolean poset with at least two atoms.}

\begin{thm}\label{thm1}
	The zero-divisor graph of a Boolean poset is  very well-covered.
\end{thm}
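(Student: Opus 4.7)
The plan is to exhibit a canonical pairing of the vertices of $\GP$ by Boolean complementation and to show that every maximal independent set selects exactly one vertex from each pair; this forces every maximal independent set to have the same size $|V(\GP)|/2$. By Theorem \ref{VA Theorem}, each vertex $x\in V(\GP)=P\setminus\{0,1\}$ has a unique complement $x'\in V(\GP)$, and $\{x,x'\}^{\ell}=\{0\}$ makes $\{x,x'\}$ an edge of $\GP$. The standing hypothesis that $P$ has at least two atoms prevents $x=x'$ (otherwise $\{x\}^{\ell}=\{0\}$ and $\{x\}^{u}=\{1\}$ would force $P=\{0,x,1\}$), so $V(\GP)$ decomposes into $|V(\GP)|/2$ disjoint complementary pairs $\{x,x'\}$.

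Next, I would fix an arbitrary maximal independent set $S$ of $\GP$ and a pair $\{x,x'\}$, and argue $|S\cap\{x,x'\}|=1$. The upper bound is clear, since $x$ and $x'$ are adjacent. For the lower bound, assume for contradiction that the intersection is empty. Maximality furnishes $y,z\in S$ with $\{x,y\}^{\ell}=\{0\}$ and $\{x',z\}^{\ell}=\{0\}$. In a Boolean poset, complementation and pseudocomplementation coincide, so the identity $x^{\perp}=(x')^{\ell}$ gives $y\leq x'$, and symmetrically $z\leq x$. Then any element of $\{y,z\}^{\ell}$ lies below both $x$ and $x'$, so $\{y,z\}^{\ell}\subseteq\{x,x'\}^{\ell}=\{0\}$. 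If $y\neq z$, this produces an edge between $y$ and $z$, contradicting the independence of $S$; if $y=z$, then $y\in\{x,x'\}^{\ell}=\{0\}$, contradicting $y\in V(\GP)$.

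Thus $|S\cap\{x,x'\}|=1$ for every complementary pair, so $|S|=|V(\GP)|/2$ for every maximal independent set $S$, which is the desired well-coveredness. The only genuinely poset-theoretic input is the implication $\{x,y\}^{\ell}=\{0\}\Rightarrow y\leq x'$, which is where the Boolean structure of $P$ enters essentially; I expect this to be the main subtlety to isolate, though it reduces immediately to the characterization $x^{\perp}=(x')^{\ell}$ of the pseudocomplement in a pseudocomplemented poset. Everything else is routine maximal-independent-set bookkeeping built on the pairing supplied by Theorem \ref{VA Theorem}.
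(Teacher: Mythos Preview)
Your proof is correct and follows essentially the same argument as the paper's: both partition $V(\GP)$ into complementary pairs via Theorem~\ref{VA Theorem} and then use pseudocomplementation (namely $\{x,y\}^\ell=\{0\}\Rightarrow y\le x'$) to show that a maximal independent set must meet every pair in exactly one vertex, the only cosmetic difference being that the paper phrases this as ``any independent set with $|S|<t$ can be enlarged'' and handles the case $y=z$ via the triangle-free property of Theorem~\ref{VA Theorem} rather than your observation that $y\le x$ and $y\le x'$ force $y=0$.
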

\begin{proof}

	Let $\Gamma(P)$ be a zero-divisor graph of a Boolean poset $P$.  
	We prove that all maximal independent vertex sets of $\Gamma(P)$ have the same cardinality, namely $\tfrac{|V(\Gamma(P))|}{2}$.  
	Consequently, $\Gamma(P)$ is very well-covered.

	Since $P$ is a Boolean poset, every element has a unique complement.  
	Hence, $|V(\Gamma(P))| = 2t$, where $t$ is the number of complementary pairs in $P$.  
	Moreover, for each vertex $a \in V(\Gamma(P))$, its complement $a'$ also belongs to $V(\Gamma(P))$.  
	Thus, the vertex set can be decomposed as
	\[
	V(\Gamma(P)) = B \cup B', \quad 
	B = \{a_1,a_2,\ldots,a_t\}, \quad 
	B' = \{a_1',a_2',\ldots,a_t'\},
	\]
	with $B \cap B' = \varnothing$.
	
	It follows that the cardinality of any independent set cannot exceed the number of complementary pairs.  
	In particular, if $S = \{a_1,a_2,\ldots,a_s\}$ is an independent set of $\Gamma(P)$, then
	\[
	|S| \leq t.
	\]
	
	We, now, show that if $|S| < t$, then $S$ can be extended to a larger independent set until $|S| = t$.  
	Since $|S| < t$, there exists at least one complementary pair $\{a_{s+1},a_{s+1}'\}$ such that
	\[
	\{a_{s+1},a_{s+1}'\} \cap S = \varnothing.
	\]
	We claim that at least one of $S \cup \{a_{s+1}\}$ or $S \cup \{a_{s+1}'\}$ remains an independent set.
	
	Suppose on the contrary that neither is independent.  
	Then there exist distinct vertices $a_i,a_j \in S$ such that
	\[
	\{a_i,a_{s+1}\}^\ell = \{0\} \quad \text{and} \quad \{a_j,a_{s+1}'\}^\ell = \{0\},
	\]
	for some $i,j \in \{1,2,\ldots,s\}$.  
	If $a_i = a_j$, then $\Gamma(P)$ would contain a triangle 
	\[
	a_{s+1} - a_i(=a_j) - a_{s+1}' - a_{s+1},
	\]
	contradicting Theorem~\ref{VA Theorem}, since $a_{s+1}'$ is the complement of $a_{s+1}$ in $\Gamma(P)$.
	
	Thus, $a_i \neq a_j$.  
	As $P$ is pseudocomplemented, the relations $\{a_i,a_{s+1}\}^\ell = \{0\}$ and $\{a_j,a_{s+1}'\}^\ell = \{0\}$ imply that $a_i \leq a_{s+1}'$ and $a_j \leq a_{s+1}$.  
	However, since $\{a_{s+1},a_{s+1}'\}^\ell = \{0\}$, we obtain $\{a_i,a_j\}^\ell = \{0\}$, which contradicts the assumption that $a_i$ and $a_j$ are both in the independent set $S$.
	
	Hence, at least one of $a_{s+1}$ or $a_{s+1}'$ can be added to $S$, enlarging it.  
	By repeating this process, $S$ can always be extended until $|S| = t$.  
	
	Therefore, every maximal independent set of $\Gamma(P)$ has size $t = \tfrac{|V(\Gamma(P))|}{2}$, and hence $\Gamma(P)$ is very well-covered.
\end{proof}

\begin{example}

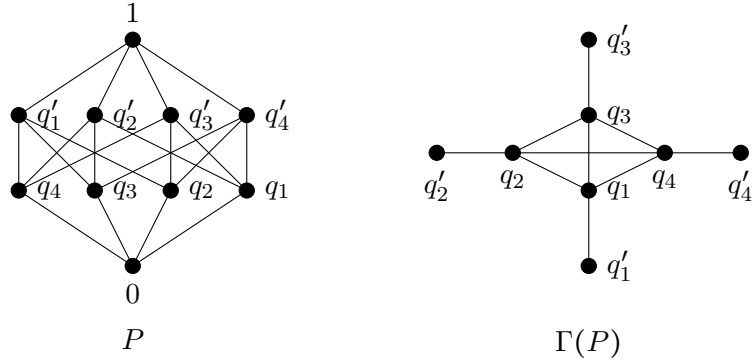
\begin{figure}[h!]
	\centering
	\begin{tikzpicture}[scale=1]
		
		\draw [fill=black] (0,0) circle (.1);
		\draw node [below] at (0,-0.1) { $0$ };
		
		\draw [fill=black] (1.5,1) circle (.1);
		\draw node [right] at (1.6,1) { $q_1$ };
		
		\draw [fill=black] (1.5,2) circle (.1);
		\draw node [right] at (1.6,2) { $q_4'$ };
		
		\draw [fill=black] (0.5,1) circle (.1);
		\draw node [right] at (0.6,1) { $q_2$ };
		
		\draw [fill=black] (-0.5,1) circle (.1);
		\draw node [right] at (-0.4,1) { $q_3$ };
		
		\draw [fill=black] (0,3) circle (.1);
		\draw node [above] at (0,3.1) { $1$ };
		
		\draw [fill=black] (-1.5,1) circle (.1);
		\draw node [right] at (-1.4,1) { $q_4$ };
		
		\draw [fill=black] (-1.5,2) circle (.1);
		\draw node [right] at (-1.4,2) { $q_1'$ };
		
		\draw [fill=black] (-0.5,2) circle (.1);
		\draw node [right] at (-0.4,2) { $q_2'$ };
		
		\draw [fill=black] (0.5,2) circle (.1);
		\draw node [right] at (0.6,2) { $q_3'$ };
		
		\draw (0,0)--(0.5,1)--(0.5,2)--(0,3);
		\draw (0,0)--(-0.5,1)--(-0.5,2)--(0,3);
		\draw (0,0)--(-1.5,1)--(-1.5,2)--(0,3);
		\draw (0,0)--(1.5,1)--(1.5,2)--(0,3);
		
		\draw (-1.5,1)--(-0.5,2);
		\draw (-1.5,1)--(0.5,2);
		\draw (-0.5,1)--(1.5,2);
		\draw (-0.5,1)--(-1.5,2);
		\draw (0.5,1)--(1.5,2);
		\draw (0.5,1)--(-1.5,2);
		\draw (1.5,1)--(0.5,2);
		\draw (1.5,1)--(-0.5,2);
		
		\draw node [below] at (0,-0.7) { $\Huge P$ };
		
		\hspace{1cm}
		
		\draw [fill=black] (5,0) circle (.1);
		\draw node [right] at (5.1,0) { $q_1'$ };
		
		\draw [fill=black] (5,1) circle (.1);
		\draw node [right] at (5.1,1) { $q_1$ };
		
		\draw [fill=black] (5,2) circle (.1);
		\draw node [right] at (5.1,2) { $q_3$ };
		
		\draw [fill=black] (5,3) circle (.1);
		\draw node [right] at (5.1,3) { $q_3'$ };
		
		\draw [fill=black] (4,1.5) circle (.1);
		\draw node [below] at (4,1.4) { $q_2$ };
		
		\draw [fill=black] (6,1.5) circle (.1);
		\draw node [below] at (6,1.4) { $q_4$ };
		
		\draw [fill=black] (7,1.5) circle (.1);
		\draw node [below] at (7,1.4) { $q_4'$ };
		
		\draw [fill=black] (3,1.5) circle (.1);
		\draw node [below] at (3,1.4) { $q_2'$ };
		
		\draw (5,0)--(5,1)--(5,2)--(5,3);
		\draw (3,1.5)--(4,1.5)--(6,1.5)--(7,1.5);
		\draw (4,1.5)--(5,2)--(6,1.5)--(5,1)--(4,1.5);
		
		\draw node [below] at (5,-0.7) { $\Huge \Gamma(P)$ };
		
	\end{tikzpicture}
	\caption{The Boolean poset $P$ and its zero-divisor graph $\Gamma(P)$.}
\end{figure}

In the above figure, $P$ denotes a Boolean poset, and $\Gamma(P)$ is the corresponding zero-divisor graph.  
Consider the simplicial complex of independent sets of $\Gamma(P)$, denoted by $\Delta_{\Gamma(P)}$.  
We have
\[
\Delta_{\Gamma(P)} = \big\langle 
\{q_1,q_2',q_3',q_4'\},
\{q_2,q_1',q_3',q_4'\},
\{q_3,q_1',q_2',q_4'\},
\{q_4,q_1',q_2',q_3'\},
\{q_1',q_2',q_3',q_4'\}
\big\rangle.
\]
It is evident that all facets of $\Delta_{\Gamma(P)}$ have the same cardinality.  
Therefore, $\Delta_{\Gamma(P)}$ is a well-covered simplicial complex. Hence, $\GP$ is well-covered.
	\end{example}

In order to prove that $\Gamma(P)$ is a Cohen-Macaulay graph, we introduce the following definition for  elements of a Boolean poset $P$.

\begin{definition} 
	The \emph{weight} (\(\textit{wt}\)) of an element \(x\) in a finite bounded poset \(P\) is defined as the number of atoms lying below \(x\).  
\end{definition}  

The weight of an element \(x\) will be denoted by \(\operatorname{wt}(x)\). Clearly, every atom has weight \(1\), and  the \textit{weight of a poset} \(P\), denoted by \(\operatorname{\textit{wt}}(P)\), is defined as the weight of the greatest element \(1 \in P\).  Furthermore, the \textit{weight of a vertex} \(v \in \Gamma(P)\) is defined as the weight of the element \(v\) in \(P\). 

Let $B_i$ be the set of all elements in $P$ with weight $k-i$, where $k$ is the number of all atoms of a Boolean poset $P$. One can verify that $B_1$ and $B_3$ are nonempty and $B_2$ is empty in the case of a Boolean poset depicted in Figure 1.

	\begin{lem}\label{weight lem}
		Let $P$ be a Boolean poset and let $x \in P$.  
		Then $x^{\prime}$ is the complement of $x$ if and only if 
		$
		\operatorname{wt}(x^{\prime}) = \operatorname{wt}(P)-\operatorname{wt}(x)
		\quad\text{and}\quad
		\{x,x^{\prime}\}^{\ell}=\{0\}.
		$
	\end{lem}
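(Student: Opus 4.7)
The plan is to reinterpret the weight identity as an atom-counting statement. For any $y \in P$ let $A(y) := \{\, a \in P : a \text{ is an atom and } a \le y \,\}$, so $\operatorname{wt}(y) = |A(y)|$ and $\operatorname{wt}(P) = |A(P)|$. The equivalence in the lemma will then be recast as: $x'$ is the complement of $x$ precisely when $A(x)$ and $A(x')$ partition $A(P)$.

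For the ``only if'' direction, I would argue that if $x'$ is the complement of $x$, then the sets $A(x)$ and $A(x')$ are disjoint and jointly cover $A(P)$. Disjointness is immediate: any atom lying below both $x$ and $x'$ would be a nonzero element of $\{x,x'\}^{\ell}$, contradicting $\{x,x'\}^{\ell}=\{0\}$. For the covering step I would invoke the remark recalled in the preliminaries that in a Boolean poset the complement coincides with the pseudocomplement, so $x' = x^{*}$. For any atom $a \in A(P)$, atomicity forces $\{a,x\}^{\ell}$ to be either $\{0,a\}$, in which case $a \le x$, or $\{0\}$, in which case $a \in x^{\perp} = (x^{*})^{\ell} = (x')^{\ell}$ and hence $a \le x'$. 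Summing cardinalities then yields $\operatorname{wt}(x) + \operatorname{wt}(x') = \operatorname{wt}(P)$.

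For the ``if'' direction, suppose $\operatorname{wt}(x') = \operatorname{wt}(P) - \operatorname{wt}(x)$. Let $c$ denote the unique complement of $x$, which exists and is unique by the Boolean hypothesis together with Theorem~\ref{VA Theorem}. The forward direction already gives $A(c) = A(P) \setminus A(x)$ and $\operatorname{wt}(c) = \operatorname{wt}(x')$. I would then use that a finite Boolean poset is atomistic, so each element is determined by its atom set; hence showing $x' = c$ amounts to showing $A(x') = A(c) = A(P) \setminus A(x)$. Since $|A(x')| = \operatorname{wt}(P) - \operatorname{wt}(x) = |A(P) \setminus A(x)|$, it suffices to establish the disjointness $A(x') \cap A(x) = \emptyset$, after which matched cardinalities force equality of the atom sets.

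The main obstacle I anticipate is precisely this last disjointness claim in the converse: the weight equation on its own does not obviously prevent an atom below $x$ from also lying below $x'$, so a counterexample on atom multiplicities would sink this direction if pushed naively. The plan to sidestep this is to exploit the Boolean structure globally: among all elements of $P$, only one has atom set exactly $A(P) \setminus A(x)$, and that element is forced to satisfy both $\{x,y\}^{\ell}=\{0\}$ and $\{x,y\}^{u}=\{1\}$, so any $x'$ with $|A(x')| = \operatorname{wt}(P) - \operatorname{wt}(x)$ whose atom set is complementary to $A(x)$ must coincide with $c$. The rest of the lemma is an atom-counting exercise, with the pseudocomplementation step $a \not\le x \Rightarrow a \le x'$ doing essentially all of the real work.
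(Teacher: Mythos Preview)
Your forward direction is essentially the paper's argument: both show that the atom sets $A(x)$ and $A(x')$ partition $A(P)$. The paper gets disjointness the same way you do, and for the covering step it argues that an atom $p$ below neither $x$ nor $x'$ would make $x$--$p$--$x'$--$x$ a triangle, contradicting Theorem~\ref{VA Theorem}; your route via $x' = x^{*}$ and $a \in x^{\perp} = (x^{*})^{\ell}$ is a clean equivalent.

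The obstacle you flag in the converse is genuine and cannot be removed, because the converse as stated is false. In the Boolean lattice $2^{[4]}$ (which is a Boolean poset), take $x = \{1,2\}$ and $y = \{1,3\}$: then $\operatorname{wt}(y) = 2 = 4 - 2 = \operatorname{wt}(P) - \operatorname{wt}(x)$, yet $y$ is not the complement of $x$ since the atom $\{1\}$ lies below both. Your attempted sidestep does not help: knowing that \emph{some} element has atom set $A(P)\setminus A(x)$ says nothing about the given $x'$, and the disjointness $A(x)\cap A(x') = \emptyset$ you need is exactly what fails in this example. The paper's own converse argument breaks at the same point: it asserts that a common atom below $x$ and $x'$ would force $\operatorname{wt}(x) + \operatorname{wt}(x') > k$, but the hypothesis is precisely that this sum equals $k$, and a shared atom only lowers $|A(x)\cup A(x')|$, not raises the sum. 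What \emph{is} true, and what suffices for the later applications in the paper, is the weaker statement: if $\operatorname{wt}(x) + \operatorname{wt}(y) = \operatorname{wt}(P)$ \emph{and} $\{x,y\}^{\ell} = \{0\}$, then $y$ is the complement of $x$ (the second part of the paper's converse proof, showing $\{x,y\}^{u} = \{1\}$, then goes through). You may want to reformulate the converse along these lines rather than try to prove the biconditional as written.
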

	
	\begin{proof}
		Let $P$ be a Boolean poset with atoms $\{p_1, p_2, \dots, p_k\}$, and let
		$x \in P$ with $\operatorname{wt}(x) = t$.  
		Since every element in $P$ has a unique complement, say $x'$.
		As $\operatorname{wt}(x) = t$, consider the set
		$\{p_{i_1}, p_{i_2}, \dots, p_{i_t}\}$ of $t$ atoms lying below $x$, where
		$\{i_1, i_2, \dots, i_t\} \subseteq \{1, 2, \dots, k\}$.  
		Similarly, let
		$\{p_{j_1}, p_{j_2}, \dots, p_{j_{\operatorname{wt}(x')}}\}$
		be the set of atoms lying below $x'$.
		
		Now, suppose that $\operatorname{wt}(x') > k - t$.  
		Then there exists at least one atom, say $p_s$, that is common to both sets
		$\{p_{i_1}, p_{i_2}, \dots, p_{i_t}\}$ and
		$\{p_{j_1}, p_{j_2}, \dots, p_{j_{\operatorname{wt}(x')}}\}$.  
		Hence $p_s \leq x$ and $p_s \leq x'$, which implies that
		$\{p_s\} \subseteq \{x, x'\}^{\ell} \neq \{0\}$,
		contradicting the fact that $x'$ is the complement of $x$.  
		Therefore, $\operatorname{wt}(x') \leq k - t$.
		
		Next, suppose $\operatorname{wt}(x') < k - t$.  
		Then there exists an atom $p_q$ that is not below either $x$ or $x'$.  
		Consequently, $\{x, p_q\}^{\ell} = \{0\} = \{x', p_q\}^{\ell}$, implying that the edge $x$–$x'$ forms an edge of a triangle, contradicting Theorem~\ref{VA Theorem}.  
		Thus, $\operatorname{wt}(x') = k - t = \operatorname{wt}(P) - \operatorname{wt}(x)$.  Clearly, $\{x, x'\}^\ell=\{0\}$. Thus the condition is verified.
		
		Conversely, suppose
		$\operatorname{wt}(x') = \operatorname{wt}(P)-\operatorname{wt}(x) = k - t$
		and $\{x,x'\}^{\ell} = \{0\}$.  
		Then no atom lies below both $x$ and $x'$, and the total number of atoms below
		$x$ and $x'$ is $t+(k-t)=k$.
		
		Finally, let $y \in \{x, x'\}^{u}$.  
		Then $y$ must lie above all $k$ atoms of $P$, and hence $y = 1$.  
		Therefore, $\{x, x'\}^{u} = \{1\}$, showing that $x'$ is a complement of $x$.
		Since $P$ is Boolean, this complement is unique.
	\end{proof}

Let $P$ be a poset with $0$. The poset $P$ is called \textit{weakly section semi-complemented} if, for $a, b \in P$ and $a<b$, there exists $c \in P$ such that $0<c \leq b$ and $\{a,c\}^{\ell}=\{0\}$. Further, $P$ is said be \textit{section semi-complemented } if $a, b \in P$ and $b \not \leq a$, there exists $c \in P$ such that $0<c \leq b$ and $\{a,c\}^{\ell}=\{0\}$. Clearly, every section semi-complemented poset is weak section semi-complemented. A finite bounded section semi-complemented poset is called \textit{atomistic.}

\begin{lem}\label{wssc}
	Every uniquely complemented poset is weakly section semi-complemented.
\end{lem}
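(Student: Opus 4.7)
The plan is to produce the required $c$ inside the lower cone $\{a',b\}^{\ell}$, where $a'$ denotes the (unique) complement of $a$. The whole proof pivots on the observation that if this lower cone were trivial, then $b$ itself would be a second complement of $a'$, violating uniqueness.

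First I would verify the cheap half: $\{a',b\}^{u}=\{1\}$. Any common upper bound $y$ of $a'$ and $b$ automatically satisfies $y\geq a$ as well (because $a\leq b\leq y$), hence $y\in\{a,a'\}^{u}=\{1\}$, forcing $y=1$. This step uses only the transitivity of $\leq$ and the fact that $a'$ is a complement of $a$.

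The main step is to show $\{a',b\}^{\ell}\neq\{0\}$. Suppose on the contrary that $\{a',b\}^{\ell}=\{0\}$. Together with $\{a',b\}^{u}=\{1\}$ from the previous paragraph, this exhibits $b$ as a complement of $a'$. But $a$ is itself a complement of $a'$, so unique complementation forces $a=b$, contradicting $a<b$. Hence there must exist some $c\in\{a',b\}^{\ell}$ with $c>0$.

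Finally, this $c$ is the element we need. Clearly $0<c\leq b$, and since $c\leq a'$, any $z\in\{a,c\}^{\ell}$ satisfies $z\leq a$ and $z\leq a'$, so $z\in\{a,a'\}^{\ell}=\{0\}$. Thus $\{a,c\}^{\ell}=\{0\}$, which is exactly what is required for $P$ to be weakly section semi-complemented. The only genuine conceptual hurdle is the contradiction in the third paragraph, where unique complementation is invoked; everything else is a mechanical manipulation of upper and lower cones.
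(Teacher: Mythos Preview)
Your proof is correct and follows essentially the same approach as the paper: both arguments pick $c$ from $\{a',b\}^{\ell}\setminus\{0\}$ and derive a contradiction to uniqueness of complements if that set were empty, using $a<b$ to obtain $\{a',b\}^{u}=\{1\}$. Your write-up is in fact somewhat cleaner: the paper's proof contains a slip (it says ``$a$ will have two complements'' when it means $a'$, and it writes the inclusion $\{a,a'\}^{u}\subseteq\{b,a'\}^{u}$ in the wrong direction), whereas you state the contradiction precisely.
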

\begin{proof}
	Let $P$ be a uniquely complemented poset and let $a < b$ for $a, b \in P$.
	Consider the complement $a'$ of $a$ in $P$. If $\{a', b\}^\ell=\{0\}$, then $a$ will have two complements namely $a'$ and $b$, as we have $\{1\}=\{a, a'\}^u \subseteq \{b, a'\}^u$. Hence $\{a', b\}^\ell\not=\{0\}$. Choose any $c \in \{a', b\}^\ell=\{0\}$. Then we have $0< c \leq b$ and $\{c,a\}^\ell=\{0\}$. This shows that every uniquely complemented poset is weakly section semi-complemented. 
\end{proof}
However, the converse is not true; see   Waphare and Joshi \cite[Figure 1]{WJ}. 

As an immediate consequence of the above lemma, we have 

\begin{cor}\label{corwssc}
	Let $P$ be a Boolean poset and $a, b \in P$ such that $a < b$. Then $\operatorname{wt}(a) < \operatorname{wt}(b)$.
\end{cor}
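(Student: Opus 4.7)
The plan is to deduce the corollary directly from Lemma \ref{wssc}, since that lemma gives exactly the tool needed to separate atoms below $b$ from atoms below $a$. First I would note the easy direction: since $a < b$, every atom $p \le a$ automatically satisfies $p \le b$, so the set of atoms below $a$ is contained in the set of atoms below $b$. This gives the weak inequality $\operatorname{wt}(a) \le \operatorname{wt}(b)$, and the task reduces to producing a single atom that lies below $b$ but not below $a$.

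To produce such an atom, I would invoke Lemma \ref{wssc}: a Boolean poset is uniquely complemented, hence weakly section semi-complemented, so from $a < b$ there exists $c \in P$ with $0 < c \le b$ and $\{a, c\}^{\ell} = \{0\}$. Since $P$ is a finite bounded poset, it is atomic, and so there is an atom $p$ with $p \le c$. Then $p \le b$ because $c \le b$, and if we had $p \le a$ as well, then $p$ would lie in $\{a, c\}^{\ell} = \{0\}$, forcing $p = 0$ and contradicting that $p$ is an atom.

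Combining the two observations, the inclusion of atom sets below $a$ into atom sets below $b$ is strict, witnessed by the atom $p$. Hence $\operatorname{wt}(a) < \operatorname{wt}(b)$. There is no real obstacle here; the entire content of the argument is packaged in Lemma \ref{wssc}, and the only step requiring any care is verifying that the atom extracted from $c$ cannot lie below $a$, which is immediate from the definition of a lower cone.
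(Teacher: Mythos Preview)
Your proof is correct and follows essentially the same approach as the paper: invoke Lemma \ref{wssc} to obtain the element $c$ with $0 < c \le b$ and $\{a,c\}^{\ell}=\{0\}$, pick an atom $p \le c$, and observe that $p$ lies below $b$ but not below $a$. The paper's argument is just a terser version of yours, and your added remark that the atoms below $a$ are contained in those below $b$ makes explicit the inequality $\operatorname{wt}(a)\le\operatorname{wt}(b)$ that the paper leaves implicit.
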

\begin{proof}
	In the proof of Lemma \ref{wssc},  take  any atom below $c$, say $p$.  Then $p$ will be below $b$ but not $a$. Thus $\operatorname{wt}(a) < \operatorname{wt}(b)$.
\end{proof}

Figure 1 given in Waphare and Joshi \cite{WJ} shows that a uniquely complemented poset need not be section semi-complemented. However, if we consider a Boolean poset then the assertion is true.

\begin{lem}
	Every Boolean poset $P$ is section semi-complemented. In particular, if $P$ is finite, then it is atomistic.
\end{lem}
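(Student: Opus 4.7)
The plan is to verify the definition directly using the unique complementation that Boolean posets enjoy. Let $a,b\in P$ with $b\not\le a$, and let $a'$ denote the unique complement of $a$. I want to produce a witness $c$ with $0<c\le b$ and $\{a,c\}^{\ell}=\{0\}$. The natural candidate is any nonzero element in the lower cone $\{a',b\}^{\ell}$, provided this cone contains something nonzero.

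The first step is to show $\{a',b\}^{\ell}\neq\{0\}$. This is where the structural hypothesis is used: in a Boolean poset, complementation coincides with pseudocomplementation (quoted right before Lemma~\ref{wssc} from \cite{VA}), so the pseudocomplement of $a'$ is precisely $a$, meaning $(a')^{\perp}=a^{\ell}$. If $\{a',b\}^{\ell}=\{0\}$ then by definition $b\in(a')^{\perp}=a^{\ell}$, i.e.\ $b\le a$, contradicting our assumption. Hence there exists $c\in\{a',b\}^{\ell}$ with $c>0$.

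The second step is automatic. By construction $c\le b$, and since $c\le a'$ we get $\{a,c\}^{\ell}\subseteq\{a,a'\}^{\ell}=\{0\}$, so in fact $\{a,c\}^{\ell}=\{0\}$. This exhibits $c$ as the required section semi-complement, establishing that $P$ is section semi-complemented. The ``in particular'' clause is then immediate: the paragraph preceding Lemma~\ref{wssc} defines a finite bounded section semi-complemented poset to be atomistic, and every Boolean poset is bounded by definition, so if $P$ is finite it is atomistic.

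I do not anticipate a real obstacle here. The one subtle point is pinning down the right characterization of complements vs.\ pseudocomplements: the argument relies on the equality $(a')^{\perp}=a^{\ell}$, and without it one cannot conclude that $\{a',b\}^{\ell}=\{0\}$ forces $b\le a$. Once that standard fact about Boolean posets is invoked, the proof is just a one-line chain of inclusions, essentially mirroring the uniqueness-of-complement argument already used in Lemma~\ref{wssc} but with the added quantitative information that any $c$ in the relevant lower cone actually lies below $b$.
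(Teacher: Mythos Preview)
Your argument is correct and follows essentially the same route as the paper: both show that $\{a',b\}^{\ell}\neq\{0\}$ by using that in a Boolean poset the complement $a'$ has pseudocomplement $(a')^{*}=a$, so $\{a',b\}^{\ell}=\{0\}$ would force $b\le a$; any nonzero $c$ in that lower cone then witnesses section semi-complementation. The only cosmetic difference is that the paper (with the roles of $a$ and $b$ interchanged) additionally passes to an atom $p\le c$, whereas you stop at $c$ and invoke the stated definition of ``atomistic'' directly---both are fine.
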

\begin{proof}
	Suppose $a \nleq b$. We claim that $\{a,b'\}^{\ell} \setminus \{0\} \neq \varnothing$. 
	Suppose on the contrary that $\{a,b'\}^{\ell} = \{0\}$. Then we have $a \leq b''=b$, a contradiction, as $P$ is a Boolean poset and hence it is pseudocomplemented. 
	Hence
	$\{a,b'\}^{\ell} \setminus \{0\} \neq \varnothing$. 
	Hence there exists a nonzero element $c \in P$ such that $c \leq a$ and $c \leq b'$; hence $c \nleq b$.
	
	Since $P$ is finite, there exists an \emph{atom} $p$ with $0 < p \leq c$. 
	Then $p \leq a$ and $p \leq b'$, that is, $\{p, b\}^\ell=\{0\}$. 	
\end{proof}

With the above preparation, we are ready to prove the following main theorem.

\begin{thm}\label{CM}
	If $P$ is a Boolean poset, then $\Gamma(P)$ is Cohen--Macaulay.
\end{thm}
\begin{proof}
	Let $P$ be a Boolean poset and $\operatorname{wt}(P)=k.$
	By Theorem \ref{thm1}, every maximal independent set have the cardinality equals
	to the number of complementary pairs, say, $t$ in $P.$
	Also, note that $t=\frac{|V(\Gamma(P))|}{2}$.
	If $k=2$, then $\Gamma(P)$ is $K_2$ and in this case the result is true.
	Hence assume that $k>2$.
	
	First, let us construct a maximal independent set $B$ of $\Gamma(P).$
	
	Suppose that the weight of the poset $k$ is odd.
	Then for $1\leq i\leq \frac{k-1}{2}$, define $B_i$ be the set of all elements in
	$P$ with weight $k-i.$
	
	Let $B_1=\{y_1,y_2,\dots,y_k\}$ be the set of elements of weight $k-1$.
	Clearly, $B_1\neq \emptyset$, as $P$ has at least two atoms and contains all the
	dual atoms of $P$.
	
	As there is no guaranty that $P$ contains an elements of weight $k-i$ for
	$2\leq i\leq k-2$, $B_i$ may or may not be empty for $2\leq i\leq k-2$.
	
	Now, suppose that $k$ is an even.
	For $i=\frac{k}{2}$, define the set $B_i=\widehat{B}$.
	
	If there are complementary pairs of weight $\frac{k}{2}$, then the set
	$B_i=\widehat{B}$ contains any one element from the each complementary pairs.
	
	Now,
	\[
	B =
	\begin{cases}
		B_1 \cup B_2 \cup \dots \cup B_{\frac{k-1}{2}},
		& \text{if $k$ is odd},\\[4pt]
		B_1 \cup B_2 \cup \dots \cup B_{\frac{k-2}{2}} \cup \widehat{B},
		& \text{if $k$ is even.}
	\end{cases}
	\]
	
	Now, we show that the set $B$ is a maximal independent set.
	
	Suppose that $k$ is even.
	Observe  that `the weight of each element in  $B$ is greater than or equal to 
	$\frac{k}{2}$.
	Let $x,y\in B$.
	Then $\operatorname{wt}(x)+\operatorname{wt}(y)\geq k$.

		If equality holds, then necessarily
		$\operatorname{wt}(x)=\operatorname{wt}(y)=\frac{k}{2}$.
		In order for $x$ and $y$ to be adjacent, we must have
		$\{x,y\}^{\ell}=\{0\}$, in which case Lemma~\ref{weight lem} implies that
		$x$ and $y$ form a complementary pair.

	However, by the construction, $B$ contains one element from each complementary
	pair having weight $\frac{k}{2}$, a contradiction.
	Thus equality does not hold.

		If $\operatorname{wt}(x)+\operatorname{wt}(y)> k$, then the sets of atoms lying
		below $x$ and $y$ intersect, and hence $\{x,y\}^{\ell}\neq \{0\}$.
		Hence $x$ and $y$ are not adjacent.
	
	Now, assume that $k$ is odd.
	Then all the elements of $B$ are of weight greater than or equal to
	$\frac{k+1}{2}$.
	Thus for any two elements $x,y\in B$ satisfy
	$\operatorname{wt}(x)+\operatorname{wt}(y)> k$. 	
		Therefore, the sets of atoms below $x$ and $y$ intersect, which implies
		$\{x,y\}^{\ell}\neq\{0\}$.
	 	Thus, $x$ and $y$ are not adjacent.
	
	Thus, $B$ is an independent set.
	\vskip 4pt
	
	Now, we prove $B$ is a maximal independent set.
	For that, let $y\in V(\Gamma(P))\setminus B$.
	Clearly, $0<\operatorname{wt}(y)\leq \frac{k}{2}$, if $k$ is even.
	
	If $\operatorname{wt}(y)=\frac{k}{2}$, then by Lemma \ref{weight lem} and the
	construction of $B$, $\operatorname{wt}(y^{\prime})=\frac{k}{2}$ and
	$y^{\prime}\in B$.

		Since $y'$ is the complement of $y$, we have $\{y,y'\}^{\ell}=\{0\}$.
		So,  $y$ and $y^{\prime}$ are adjacent.
	
	If $\operatorname{wt}(y)=l< \frac{k}{2}$, clearly, $y^{\prime}$ is in $P$ with
	$\operatorname{wt}(y^{\prime})=k-l$.
	As $l<\frac{k}{2}$, $\operatorname{wt}(y^{\prime})>\frac{k}{2}$, by the
	construction, $y^{\prime}\in B$. 
	
		Again, $\{y,y'\}^{\ell}=\{0\}$, so $y$ is adjacent to $y'$.

	So, we can not extend the independent set $B$, when $k$ is even.
	
	Now, assume that $k$ is odd.
	Then $B$ contains all elements of weight greater than or equal to
	$\frac{k+1}{2}$.
	Thus the element $y\in V(\Gamma(P))\setminus B$ has weight at most
	$\frac{k-1}{2}$. 	
		By Lemma \ref{weight lem}, $y$ has a unique complement $y'\in B$ and
		$\{y,y'\}^{\ell}=\{0\}$.
		Thus $y$ is adjacent to $y'$.
	
	Thus $B$ is the maximal independent set of $\Gamma(P)$.
	\vskip 4pt
	As $B$ admits no natural order, we impose one by listing the elements of
	$B_1, B_2, \ldots$ successively, and, when $k$ is even, listing $\widehat{B}$ last.
	The ordering within each subset is arbitrary and serves only for labelling purposes.
		
	Now, let's give relabelling to the elements of $B$.
	Label
	\[
	B=\{x_1^{\prime},x_2^{\prime},\dots,x_t^{\prime}\}.
	\]
	
	Now, let the set
	\[
	A=V(\Gamma(P))\setminus B=\{x_1,x_2,\dots,x_t\}
	\]
	such that $x_i^{\prime}$ is the complement of $x_i$ for all
	$i=1,2,\dots,t$ and $V(\Gamma(P))=A\cup B$.
	
	Finally, to prove that $\Gamma(P)$ is Cohen--Macaulay, the above labeling must
	satisfy the conditions given in Lemma \ref{MY}.
	
	\textit{Condition (a):}
	Since the complement of any maximal independent set is a minimal vertex cover,
	the set $A$ is a minimal vertex cover of $\Gamma(P).$
	
	\textit{Condition (b):}
	Since $\{x_i,x_i^{\prime}\}^{\ell}=\{0\}$, we have $x_i$ is adjacent to
	$x_i^{\prime}$ in $\Gamma(P)$ for each $i=1,2,\dots,t$.
	
	\textit{Condition (c):}  Assume that $\{z_i,x_j\}, \{x_j^{\prime},x_k\} \in E(\Gamma(P))$ for $z_i \in \{x_i, x_i^\prime\}$. Then $\{z_i,x_j\}^\ell =\{0\}=\{x_k, x_j^\prime\}^\ell$.  Since $P$ is Boolean, it is pseudocomplemented and unique complementation coincides with the pseudocomplementation, we have $x_j^{\prime}$ is pseudocomplement of $x_j$. Further,  $\{z_i,x_j\}^\ell =\{0\}$, we get  $z_i\leq x_j^{\prime}$. Similarly,  and $x_k\leq x_j^{\prime\prime}=x_j.$ This implies that  $\{z_i,x_k\}^{\ell}=\{0\}.$ Therefore, $\{z_i,x_k\}$ is an edge in $\Gamma(P)$.
	
	\textit{Condition (d):}
	Suppose $\{x_i,x_j^{\prime}\}\in E(\Gamma(P))$.
	Then $\{x_i,x_j^{\prime}\}^\ell=\{0\}$.
	This gives $x_i \leq x_j^{\prime\prime}=x_j$.
	Hence $\{x_i,x_j\}\notin E(\Gamma(P))$.
	
	\textit{Condition (e):}
	Assume that $\{x_i,x_j^{\prime}\}\in E(\Gamma(P))$. Then $\{x_i,x_j^{\prime}\}^\ell=\{0\}$. 
	
	We claim that $i\leq j$.
	
	Now, if possible let $i>j$.
	By the relabeling $\operatorname{wt}(x_i^\prime)
	\leq \operatorname{wt}(x_j^\prime)$.
	As $x_i^\prime$ is the unique complement of $x_i$ in $P$,
	$\operatorname{wt}(x_i)+\operatorname{wt}(x_i^\prime)=k$.
	By the above inequality,
	$\operatorname{wt}(x_i)+\operatorname{wt}(x_j^\prime)\geq k$.

		If equality holds, then together with $\{x_i,x_j'\}^{\ell}=\{0\}$,
		Lemma \ref{weight lem} implies that $x_j^\prime$ is the complement of $x_i$,
		contradicting uniqueness.

		If $\operatorname{wt}(x_i)+\operatorname{wt}(x_j^\prime)> k$, then the sets of
		atoms below $x_i$ and $x_j'$ intersect, and hence
		$\{x_i,x_j^\prime\}^{\ell}\neq \{0\}$,
		contradicting $\{x_i,x_j^\prime\}\in E(\Gamma(P))$.
		Hence, $i\leq j$.
	
	Thus all the conditions of Lemma \ref{MY} are satisfied, and $\Gamma(P)$ is
	Cohen--Macaulay.
	\end{proof}

  As an immediate consequence of the above result, we have the following results proved by A-Ming Liu and Tongsuo Wu in \cite{LW}.

\begin{cor}
	The zero-divisor graph of a Boolean algebra (equivalently Boolean ring) is Cohen–Macaulay.
\end{cor}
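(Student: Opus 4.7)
The plan is to invoke Lemma~\ref{MY}, which characterizes Cohen--Macaulayness for very well-covered graphs via the existence of a special labeling of the vertex set. Since Corollary~\ref{cor1} tells us $\Gamma(P)$ is very well-covered, the machinery applies, and the task reduces to producing a concrete labeling $V(\Gamma(P))=\{x_1,\ldots,x_t,x_1',\ldots,x_t'\}$ satisfying the five conditions (a)--(e) of Lemma~\ref{MY}.

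First I would construct a specific maximal independent set $B$ using weights. Let $k=\operatorname{wt}(P)$. The natural candidate is to take all vertices of weight strictly greater than $k/2$, together with (if $k$ is even) one representative from each complementary pair of weight exactly $k/2$. Independence follows from Lemma~\ref{weight lem}: any two such vertices $x,y$ satisfy $\operatorname{wt}(x)+\operatorname{wt}(y)>k$, which forces a common atom in $\{x,y\}^{\ell}$ and hence rules out an edge. Maximality follows because any $v\notin B$ has $\operatorname{wt}(v)\le k/2$, so by Lemma~\ref{weight lem} its unique complement $v'$ lies in $B$ and is adjacent to $v$.

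Next I would label the elements of $B$ as $x_1',x_2',\ldots,x_t'$ in order of \emph{nonincreasing} weight, and define $x_i$ to be the unique complement of $x_i'$ in $P$. Then $A=V(\Gamma(P))\setminus B=\{x_1,\ldots,x_t\}$ is automatically a minimal vertex cover (being the complement of a maximal independent set in a very well-covered graph), giving condition (a). Condition (b) is immediate from $\{x_i,x_i'\}^{\ell}=\{0\}$. For conditions (c) and (d), the key is that in a Boolean poset complementation coincides with pseudocomplementation, so each edge relation $\{u,v\}^{\ell}=\{0\}$ translates to an inequality $u\le v'$; chaining such inequalities through $x_j''=x_j$ produces the required edges or non-edges.

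The subtle condition, and where I expect the real work to lie, is condition (e): if $\{x_i,x_j'\}\in E(\Gamma(P))$ one must show $i\le j$. I would argue by contradiction. If $i>j$, then by the weight-ordered labeling $\operatorname{wt}(x_i')\le\operatorname{wt}(x_j')$, and Lemma~\ref{weight lem} gives $\operatorname{wt}(x_i)+\operatorname{wt}(x_i')=k$, hence $\operatorname{wt}(x_i)+\operatorname{wt}(x_j')\ge k$. Equality would make $x_j'$ a second complement of $x_i$ (again by Lemma~\ref{weight lem}), contradicting uniqueness from Theorem~\ref{VA Theorem}; strict inequality produces a common atom below $x_i$ and $x_j'$, contradicting that $\{x_i,x_j'\}$ is an edge. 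This weight-arithmetic step, together with the care needed to set up $B$ correctly when $k$ is even so that no complementary pair of weight $k/2$ is fully absorbed into $B$, constitutes the main obstacle; the remaining verifications reduce to straightforward translations between $\ell$-cones and pseudocomplement inequalities.
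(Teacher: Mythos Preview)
Your argument is correct and matches the paper's approach essentially step for step: the weight-based construction of $B$, the nonincreasing-weight labeling, and the verification of Lemma~\ref{MY}(a)--(e) are exactly how the paper proves Theorem~\ref{CM}. The only difference is that the paper records this corollary as an immediate one-line consequence of Theorem~\ref{CM} (since every Boolean algebra is a Boolean poset), so the substance of what you wrote appears there as the proof of that more general theorem rather than of the corollary itself.
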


Since the simplicial complex of independent sets of a graph $G$ coincides with the simplicial complex of cliques in its complement $G^c$, we further deduce the following.

\begin{cor}
	The complement $\Gamma^c(P)$ of the zero-divisor graph of a Boolean poset $P$ is Cohen–Macaulay with its clique complex.
\end{cor}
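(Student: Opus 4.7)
The plan is to apply Lemma~\ref{MY}. Since Corollary~\ref{cor1} already establishes that $\Gamma(P)$ is very well-covered with $|V(\Gamma(P))|=2t$, where $t$ is the number of complementary pairs in $P$, the task reduces to exhibiting a labeling $V(\Gamma(P))=\{x_1,\dots,x_t,y_1,\dots,y_t\}$ satisfying conditions (a)--(e). The guiding idea is to stratify vertices by their weight (the number of atoms they dominate) and to use Lemma~\ref{weight lem}, which forces the weights of a complementary pair to sum to $k:=\operatorname{wt}(P)$.

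First I would construct the maximal independent set explicitly. For every complementary pair $\{a,a'\}$, one partner has weight $\geq k/2$ and the other $\leq k/2$; put the heavier partner into $B$, breaking ties at weight $k/2$ (which occur only when $k$ is even) by picking exactly one from each such pair. Any two elements $x,y\in B$ satisfy $\operatorname{wt}(x)+\operatorname{wt}(y)\geq k$: strict inequality produces an atom lying below both, so $\{x,y\}^\ell\neq\{0\}$ and they are non-adjacent; equality together with Lemma~\ref{weight lem} would force $y=x'$, which $B$ excludes by construction. Hence $B$ is independent. Maximality is immediate because any vertex $v\notin B$ has $\operatorname{wt}(v)\leq k/2$, so its complement $v'$ has weight $\geq k/2$ and lies in $B$, and $v$ is adjacent to $v'$ since $\{v,v'\}^\ell=\{0\}$. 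Now enumerate the elements of $B$ as $y_1,\dots,y_t$ in \emph{non-increasing} order of weight, and for each $i$ let $x_i$ denote the unique complement of $y_i$, so $V(\Gamma(P))=\{x_1,\dots,x_t\}\cup\{y_1,\dots,y_t\}$.

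Verification of the structural conditions is then driven by pseudocomplementation. Condition (a) is automatic: the complement of a maximal independent set in any graph is a minimal vertex cover. Condition (b) holds because $x_i$ and $y_i$ are complements in $P$. For (c), given $\{z_i,x_j\},\{y_j,x_k\}\in E(\Gamma(P))$, pseudocomplementation yields $z_i\leq x_j'=y_j$ and $x_k\leq y_j'=x_j$, hence $\{z_i,x_k\}^\ell\subseteq\{y_j,x_j\}^\ell=\{0\}$. Condition (d) follows similarly: $\{x_i,y_j\}\in E$ gives $x_i\leq y_j'=x_j$, so $x_i$ and $x_j$ are comparable and thus not adjacent in $\Gamma(P)$.

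The main obstacle, and the reason the weight-based ordering is chosen, is condition (e): that $\{x_i,y_j\}\in E(\Gamma(P))$ implies $i\leq j$. Suppose for contradiction that $i>j$. By the labeling, $\operatorname{wt}(y_i)\leq \operatorname{wt}(y_j)$, so Lemma~\ref{weight lem} gives
\[
\operatorname{wt}(x_i)+\operatorname{wt}(y_j)\;\geq\;\operatorname{wt}(x_i)+\operatorname{wt}(y_i)\;=\;k.
\]
Equality would exhibit $y_j$ as a second complement of $x_i$, contradicting uniqueness of complements in the Boolean poset $P$; strict inequality would force a shared atom below $x_i$ and $y_j$, contradicting $\{x_i,y_j\}^\ell=\{0\}$. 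Either horn contradicts the hypothesis, establishing $i\leq j$. This weight-counting step, combining Lemma~\ref{weight lem} with uniqueness of complements (Theorem~\ref{VA Theorem}), is the technical heart of the argument; once it is in place, Lemma~\ref{MY} delivers the Cohen--Macaulay conclusion.
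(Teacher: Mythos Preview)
Your argument is a correct and essentially verbatim reproduction of the paper's proof of Theorem~\ref{CM}, establishing that $\Gamma(P)$ is Cohen--Macaulay with respect to its independence complex. However, the statement you are asked to prove concerns the \emph{complement} graph $\Gamma^c(P)$ and its \emph{clique} complex, and your write-up never mentions either. The missing link is a one-line observation: the cliques of $\Gamma^c(P)$ are precisely the independent sets of $\Gamma(P)$, so the clique complex of $\Gamma^c(P)$ coincides with the independence complex $\Delta_{\Gamma(P)}$. Without that sentence, the proof does not actually address the corollary as stated.

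The paper's own treatment of this corollary is exactly that single observation: it simply invokes Theorem~\ref{CM} (already established) together with the identification of the two simplicial complexes. So you have done far more work than required---re-deriving Theorem~\ref{CM} from Lemma~\ref{MY} rather than citing it---and still stopped one sentence short of the target. Add the clique/independence identification and drop (or replace by a reference to Theorem~\ref{CM}) the long verification of conditions (a)--(e).
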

In general, the converses of Theorems~\ref{thm1}, and \ref{CM} do not hold.  
The following example serves as a counterexample for the same.

\begin{example}
	Consider a bounded  poset $P$, having three atoms say $a,b,c$ covered by $1,$  which is not a Boolean poset, but the corresponding zero-divisor graph is the complete graph $K_3$. Hence, it is well-covered and Cohen-Macaulay. 
	
\end{example}

Now, we prove the converse of Theorem \ref{CM} for a special class of posets.
For this purpose, we introduce the notations and definitions given in~\cite{KJ}, which will be necessary for proving the forthcoming results.

 The  direct product of posets $P_1,\dots,P_n$ is the poset
$\textbf{P}=\prod\limits_{i=1}^nP_i$ with $\leq$ defined such that $a\leq b$ in $\textbf{P}$ if and only if
$a_{i}\leq b_{i}$ (in $P_i$) for every $i\in\{1,\dots,n\}$. For any
$\emptyset\neq A\subseteq \prod\limits_{i=1}^nP_i$, note that
$A^u=\{b\in\prod\limits_{i=1}^nP_i$ $|$ $b_{i}\geq a_{i}$ for every $a\in A$
and $i\in\{1,\dots,n\}\}$. Similarly, $A^\ell=\{b\in\prod\limits_{i=1}^nP_i$
$|$ $b_{i}\leq a_{i}$ for every $a\in A$ and $i\in\{1,\dots,n\}\}$.

{\par  Throughout this section, $ \mathbf{ P}=\prod\limits_{i=1}^{n}P_i$, $(n\geq 2)$, where $P_i$'s are  finite bounded posets such that $Z(P_i)=\{0\}$, $\forall i$ and $2\leq|P_1|\leq|P_2|\leq\dots\leq|P_n|$. Since $P_{i}$'s are finite bounded poset, $P_{i}$'s are atomic. Note that $Z(P_i)=0$ if and only if $P_i$ has the unique atom. Further, assume that $q_{_i}\in P_i$ is the unique atom of $P_i$ for every $i$. 
	
\textit{ All the elements of $ \mathbf{ P}$ are denoted by bold letters}. 

Let $\mathbf{x}=(x_{1},x_{2},\dots,x_{n})\in \mathbf{ P} $, where $x_i\in P_i$. In particular, $\mathbf{q}_{_1},\mathbf{q}_{_2},\dots, \mathbf{q}_{_n}$ are the only atoms of $ \mathbf{ P}$. That is, $\mathbf{q}_{_{i}}=(0,\dots,0,q_{_i},0,\dots,0)$. We observe that $(0,0,\dots,0),(1,1,\dots,1)\in \mathbf{ P}$ are the least and the greatest elements of $ \mathbf{ P}$ denoted by $\mathbf{0}$ and $\mathbf{1}$ respectively. We set $D = \mathbf{P}\setminus Z(\mathbf{P}).$ The elements $d \in D $ are the dense
elements of $\mathbf{P}$.

\begin{thm}
	 Let $ \mathbf{ P}=\prod\limits_{i=1}^{n}P_i$, $(n=2)$, where $P_i$'s are  finite bounded posets such that $Z(P_i)=\{0\}$, $\forall i$ and $2\leq|P_1|\leq|P_2|$. Then the following statements are equivalent.
	 \begin{enumerate}
	 	\item $\Gamma(\mathbf{P})$ is Cohen-Macaulay.
	 	\item $|P_1|=|P_2|=2$, therefore,  $\mathbf{P}$ is a Boolean lattice
	 \end{enumerate}
\end{thm}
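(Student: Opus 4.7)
The implication $(2) \Rightarrow (1)$ is immediate from Theorem~\ref{CM}: if $|P_1|=|P_2|=2$, then $\mathbf{P}$ is the Boolean lattice $\mathbf{2}\times\mathbf{2}$ (hence a Boolean poset), so $\Gamma(\mathbf{P})$ is Cohen--Macaulay. The substantive direction is $(1) \Rightarrow (2)$, and my plan is first to show that $\Gamma(\mathbf{P})$ must be the complete bipartite graph $K_{|P_1|-1,\,|P_2|-1}$, then to use Lemma~\ref{MY} to extract the constraints $|P_1|=|P_2|=2$.

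The first step is a structural description of $\Gamma(\mathbf{P})$. Since $P_i$ is finite, bounded and $Z(P_i)=\{0\}$, the poset $P_i$ has a unique atom $q_i$ (two distinct atoms $p\neq q$ would satisfy $\{p,q\}^\ell=\{0\}$, forcing $p,q\in Z(P_i)$), and every nonzero element of $P_i$ contains $q_i$. Consequently, $\{x,y\}^\ell\neq\{0\}$ for any two nonzero $x,y\in P_i$. Exploiting the product identity
\[
\{(a_1,a_2),(b_1,b_2)\}^\ell \;=\; \{a_1,b_1\}^\ell \times \{a_2,b_2\}^\ell,
\]
I conclude that a pair of distinct vertices of $\mathbf{P}$ is adjacent in $\Gamma(\mathbf{P})$ if and only if, up to a swap, one has the form $(a,0)$ and the other $(0,b)$ with $a\in P_1\setminus\{0\}$ and $b\in P_2\setminus\{0\}$. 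Hence $V(\Gamma(\mathbf{P}))=\{(a,0):a\in P_1\setminus\{0\}\}\cup\{(0,b):b\in P_2\setminus\{0\}\}$ and $\Gamma(\mathbf{P})\cong K_{m_1,m_2}$ where $m_i=|P_i|-1$.

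Assuming $\Gamma(\mathbf{P})$ is Cohen--Macaulay, I would first use the standard fact that Cohen--Macaulay implies well-covered: the two sides of $K_{m_1,m_2}$ are maximal independent sets of cardinalities $m_1$ and $m_2$, so well-coveredness forces $m_1=m_2=:m$, i.e., $|P_1|=|P_2|$. To rule out $m\geq 2$, I apply Lemma~\ref{MY} to $K_{m,m}$: any admissible relabeling of the $2m$ vertices must place the minimal vertex cover $\{x_1,\dots,x_m\}$ on one side of the bipartition and the maximal independent set $\{y_1,\dots,y_m\}$ on the other, so \emph{every} pair $\{x_i,y_j\}$ is an edge. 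Condition (e) of Lemma~\ref{MY} then demands $i\leq j$ for all $(i,j)\in\{1,\dots,m\}^2$, which is impossible once $m\geq 2$. Therefore $m=1$, giving $|P_1|=|P_2|=2$ and $\mathbf{P}=\mathbf{2}\times\mathbf{2}$, as required.

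The only potentially delicate step is the structural identification $\Gamma(\mathbf{P})\cong K_{m_1,m_2}$; once that is established, the hypothesis $Z(P_i)=\{0\}$ has been fully exploited and the rest is a clean application of the unmixedness necessary condition together with the ordering clause (e) of Lemma~\ref{MY}. Condition (e) is precisely the asymmetric constraint that a complete bipartite graph on equal sides cannot satisfy unless it collapses to a single edge, which is exactly why the converse of Theorem~\ref{CM} succeeds in the product setting.
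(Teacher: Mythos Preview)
Your argument is correct and follows the same overall line as the paper: both identify $\Gamma(\mathbf{P})$ as a complete bipartite graph and then argue that Cohen--Macaulayness forces it to be a single edge. The paper dispatches $(1)\Rightarrow(2)$ in one sentence, simply asserting that the complete bipartite graph is Cohen--Macaulay only in the $K_{1,1}$ case (and in fact writes $K_{|P_1|,|P_2|}$, an apparent off-by-one; your $K_{|P_1|-1,\,|P_2|-1}$ is the correct count). Where you differ is in the justification of that last step: rather than appealing to the folklore characterization of Cohen--Macaulay complete bipartite graphs, you first extract $m_1=m_2$ from unmixedness and then kill $m\geq 2$ via condition~(e) of Lemma~\ref{MY}. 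This is a pleasant, self-contained use of the paper's own toolkit in place of an external citation, and it makes transparent exactly which axiom of the very-well-covered criterion obstructs $K_{m,m}$ for $m\geq 2$.
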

\begin{proof}
	$(1)\implies (2)$: 	Observe that, $\Gamma(\mathbf{P})$ is a complete bipartite graph $K_{|P_1|-1,|P_2|-1}$ which is \emph{Cohen-Macaulay} only if  $|P_1|=|P_2|=2.$
	
	$(2)\implies (1)$:
	If $|P_1|=|P_2|=2$, then $\Gamma(\mathbf{P})=K_2.$ Hence, it is \emph{Cohen-Macaulay.}
\end{proof}

\begin{lem}\label{MIS}  Let $ \mathbf{ P}=\prod\limits_{i=1}^{n}P_i$, $(n\geq 3)$, where $P_i$'s are  finite bounded posets such that $Z(P_i)=\{0\}$, $\forall i$ and $2\leq|P_1|\leq|P_2|\leq\dots\leq|P_n|$. Let $\mathbf{q}_1, \mathbf{q}_2, \ldots, \mathbf{q}_n$ be the all atoms of $\mathbf{P}$. For each $1\leq i<j<k\leq n$, define \[ \mathfrak{J}_{i,j,k} \;=\; \Bigl\{\, \mathbf{x} \in \mathbf{P} \;\Bigm|\; \mathbf{x} \in \{\mathbf{q}_i,\mathbf{q}_j\}^u \;\;\text{or}\;\; \mathbf{x} \in \{\mathbf{q}_j,\mathbf{q}_k\}^u \;\;\text{or}\;\; \mathbf{x} \in \{\mathbf{q}_i,\mathbf{q}_k\}^u \Bigr\}\setminus D . \] Then $\mathfrak{J}_{i,j,k}$ is a maximal independent set in $\Gamma(\mathbf{P})$. \end{lem}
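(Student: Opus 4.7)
The plan is to recast adjacency and membership in $\mathfrak{J}_{i,j,k}$ in terms of the zero/nonzero pattern of coordinates, after which both independence and maximality reduce to a pigeonhole argument on the three-element index set $\{i,j,k\}$.

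The key preliminary reformulation is this: since each $P_\ell$ has unique atom $q_\ell$ and $Z(P_\ell)=\{0\}$, every nonzero element of $P_\ell$ lies above $q_\ell$. Two coordinate-wise consequences follow. First, for $\mathbf{x},\mathbf{y}\in\mathbf{P}$, $\{\mathbf{x},\mathbf{y}\}^\ell=\{\mathbf{0}\}$ if and only if for every coordinate $\ell$, either $x_\ell=0$ or $y_\ell=0$. Second, $D=\{\mathbf{x}\in\mathbf{P}:x_\ell>0\text{ for all }\ell\}$, and $V(\Gamma(\mathbf{P}))$ consists precisely of elements having at least one zero and at least one nonzero coordinate. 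Furthermore, $\mathbf{x}\in\{\mathbf{q}_i,\mathbf{q}_j\}^u$ amounts simply to $x_i>0$ and $x_j>0$, so $\mathbf{x}\in\mathfrak{J}_{i,j,k}$ if and only if at least two of $x_i,x_j,x_k$ are nonzero and $\mathbf{x}\notin D$. In particular, $\mathfrak{J}_{i,j,k}\subseteq V(\Gamma(\mathbf{P}))$.

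For independence, take any $\mathbf{x},\mathbf{y}\in\mathfrak{J}_{i,j,k}$. Each has at least two nonzero coordinates in the three-element set $\{i,j,k\}$, so by pigeonhole there exists $\ell\in\{i,j,k\}$ with $x_\ell>0$ and $y_\ell>0$. Then $\{\mathbf{x},\mathbf{y}\}^\ell\neq\{\mathbf{0}\}$ by the characterization above, so $\mathbf{x}$ and $\mathbf{y}$ are non-adjacent.

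For maximality, let $\mathbf{x}\in V(\Gamma(\mathbf{P}))\setminus\mathfrak{J}_{i,j,k}$; since $\mathbf{x}\notin D$, the only obstruction to membership is that at most one of $x_i,x_j,x_k$ is nonzero. Choose two distinct indices $r,s\in\{i,j,k\}$ at which $\mathbf{x}$ vanishes, and let $\mathbf{y}$ be the element of $\mathbf{P}$ whose only nonzero coordinates are $q_r$ in position $r$ and $q_s$ in position $s$. Then $\mathbf{y}\in\{\mathbf{q}_r,\mathbf{q}_s\}^u$, and $\mathbf{y}\notin D$ because the hypothesis $n\ge 3$ ensures $\mathbf{y}$ has a zero coordinate; hence $\mathbf{y}\in\mathfrak{J}_{i,j,k}$. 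For every coordinate $\ell$, either $\ell\notin\{r,s\}$, in which case $y_\ell=0$, or $\ell\in\{r,s\}$, in which case $x_\ell=0$; so $\mathbf{x}\sim\mathbf{y}$ in $\Gamma(\mathbf{P})$. The only point requiring care is precisely this last step: one must verify that the chosen witness $\mathbf{y}$ really belongs to $\mathfrak{J}_{i,j,k}$, which is where the assumption $n\ge 3$ is essential. Everything else is routine bookkeeping built on the coordinate-wise characterization of adjacency and the three-element pigeonhole.
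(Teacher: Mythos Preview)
Your proof is correct and follows essentially the same route as the paper's: both establish independence by finding a common atom below any two members of $\mathfrak{J}_{i,j,k}$, and both prove maximality by constructing, for any $\mathbf{x}\notin\mathfrak{J}_{i,j,k}$, the element supported only on two coordinates $r,s\in\{i,j,k\}$ where $\mathbf{x}$ vanishes. Your version front-loads the coordinate-wise reformulation and phrases the independence step as a pigeonhole argument rather than as explicit cases, but the underlying argument is the same.
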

\begin{proof}
	Let $\mathbf{x},\mathbf{y}\in \mathfrak{J}_{i,j,k}$. We prove the result by using following cases.  
	
	\noindent\textbf{Case I.} Without loss of generality, assume that  $\mathbf{x},\mathbf{y}\in \{\mathbf{q}_i,\mathbf{q}_j\}^u$.  Then \mbox{$\{\mathbf{q}_i,\mathbf{q}_j\}\subseteq \{\mathbf{x},\mathbf{y}\}^{\ell}\neq \{\mathbf{0}\}.$  }
	
	\noindent\textbf{Case II.} Without loss of generality, assume that  $\mathbf{x}\in \{\mathbf{q}_i,\mathbf{q}_j\}^u$ and $\mathbf{y}\in \{\mathbf{q}_j,\mathbf{q}_k\}^u$, then $\{\mathbf{q}_j\}\subseteq \{\mathbf{x},\mathbf{y}\}^{\ell}\neq \{\mathbf{0}\}.$  
	
	Thus, in every case, $\{\mathbf{x},\mathbf{y}\}^{\ell}\neq \{\mathbf{0}\}$, which means no two vertices of $J_{i,j,k}$ are adjacent in $\Gamma(\mathbf{P})$. Hence $\mathfrak{J}_{i,j,l}$ is an independent set.  
	\vskip 4pt 
	Now, to prove maximality of $\mathfrak{J}_{i,j,k}$, let $\mathbf{w}\in V(\Gamma(\mathbf{P}))\setminus \mathfrak{J}_{i,j,k}$.  
	Then $\mathbf{w}\notin \{\mathbf{q}_i,\mathbf{q}_j\}^u$,\linebreak $\mathbf{w}\notin \{\mathbf{q}_j,\mathbf{q}_k\}^u$, and $\mathbf{w}\notin \{\mathbf{q}_i,\mathbf{q}_k\}^u$.  	Without loss of generality, assume that $ \mathbf{q}_i \nleq \mathbf{w} $ and $\mathbf{q}_j \nleq \mathbf{w}.$ So, $x_i=0=x_j$ for $\mathbf{w}=(x_1,x_2,\dots,x_n).$
	Choose an element $\mathbf{x}=(0,\dots,0,q_i,\dots,q_j,0,\dots,0)$. Then $\mathbf{x}\in \{\mathbf{q}_i,\mathbf{q}_j\}^u$. Hence  $\mathbf{x} \in \mathfrak{J}_{i,j,k}$.  Further, $\{\mathbf{x},\mathbf{w}\}^\ell = \{0\}$.  
	This means $\mathbf{x}$ and $\mathbf{w}$ are adjacent in $\Gamma(\mathbf{P})$.  Therefore, $\mathbf{w}$ is adjacent to at least one vertex of $\mathfrak{J}_{i,j,k}$, and hence, $\mathfrak{J}_{i,j,k}\cup\{\mathbf{w}\}$ is not an independent set.  	Hence, $\mathfrak{J}_{i,j,k}$ is a maximal independent set in $\Gamma(\mathbf{P})$.  
\end{proof}

The proof of the next result follows the same ideas as the proof of Theorem~3.5 presented by Asir et al.~\cite{asir}, but rewritten in the language of poset theory. For the sake of completeness, we include the full argument here.

\begin{thm}\label{well}
	Let $ \mathbf{ P}=\prod\limits_{i=1}^{n}P_i$, $(n\geq 3)$, where $P_i$'s are  finite bounded posets such that $Z(P_i)=\{0\}$, $\forall i$ and $2\leq|P_1|\leq|P_2|\leq\dots\leq|P_n|$. Then the zero-divisor graph $\Gamma(\mathbf{P})$ is well-covered if and only if $|P_i|=2$ for all $i=1,2,\ldots,n$.
\end{thm}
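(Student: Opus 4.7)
The plan splits by implication. For sufficiency, if $|P_i|=2$ for every $i$, then $\mathbf{P}=\prod_{i=1}^{n}\{0,1\}$ is a finite Boolean lattice and in particular a Boolean poset, so Theorem~\ref{thm1} immediately yields that $\Gamma(\mathbf{P})$ is well-covered.

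For necessity I will compare the cardinalities of two explicit families of maximal independent sets. The key combinatorial translation is that, since each $P_i$ is finite bounded with $Z(P_i)=\{0\}$, it has a unique atom $q_i$ and every nonzero element of $P_i$ lies above $q_i$. Defining the \emph{support} $S(\mathbf{x}):=\{i : q_i \leq x_i\}$, one checks that $V(\Gamma(\mathbf{P}))=\{\mathbf{x}\in\mathbf{P} : \varnothing\subsetneq S(\mathbf{x})\subsetneq \{1,\dots,n\}\}$, that adjacency in $\Gamma(\mathbf{P})$ is equivalent to $S(\mathbf{x})\cap S(\mathbf{y})=\varnothing$, and that the number of vertices with a prescribed support $S$ equals $\prod_{i\in S}(|P_i|-1)$.

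Writing $m_i:=|P_i|-1\geq 1$, I consider the star $A_1:=\{\mathbf{x}\in V(\Gamma(\mathbf{P})) : \mathbf{q}_1 \leq \mathbf{x}\}$, which is a maximal independent set since any two of its elements share $\mathbf{q}_1$ as a nonzero lower bound while every vertex $\mathbf{w}\notin A_1$ satisfies $w_1=0$ and is thus adjacent to $\mathbf{q}_1\in A_1$. By Lemma~\ref{MIS}, $\mathfrak{J}_{1,2,k}$ is also a maximal independent set for each $k\in\{3,\dots,n\}$. Splitting the set differences $A_1\setminus\mathfrak{J}_{1,2,k}$ (supports with $1\in S$ and $2,k\notin S$) and $\mathfrak{J}_{1,2,k}\setminus A_1$ (supports with $2,k\in S$ and $1\notin S$), a direct count gives
\[
|A_1|-|\mathfrak{J}_{1,2,k}|\;=\;(m_1 - m_2 m_k)\prod_{i\in\{3,\dots,n\}\setminus\{k\}}(1+m_i).
\]
Since the product factor is strictly positive, well-coveredness forces $m_1 = m_2 m_k$ for every $k\in\{3,\dots,n\}$.

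The conclusion follows from the ordering $m_1\leq m_2\leq\cdots\leq m_n$ with each $m_i\geq 1$: taking $k=3$ gives $m_1 = m_2 m_3 \geq m_3 \geq m_2 \geq m_1$, so the entire chain collapses and $m_1=m_2=m_3=1$; the relations $1 = m_k$ for $k\geq 4$ then yield $m_i=1$ for every $i$, i.e.\ $|P_i|=2$. The main obstacle is choosing the right pair of maximal independent sets: the star $A_1$ alone is insensitive to differences among the $m_i$, and a single $\mathfrak{J}_{1,2,3}$ only constrains the first three coordinates; letting $k$ range over $\{3,\dots,n\}$ is what simultaneously sweeps up all the remaining coordinates.
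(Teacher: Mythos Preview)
Your argument is correct. The support formalism $S(\mathbf{x})=\{i:x_i\neq 0\}$ cleanly encodes both adjacency and the fibre counts, and the set-difference computation $|A_1|-|\mathfrak{J}_{1,2,k}|=(m_1-m_2m_k)\prod_{i\neq 1,2,k}(1+m_i)$ is right. The deduction $m_1=m_2m_3\geq m_3\geq m_2\geq m_1$ forcing $m_1=m_2=m_3=1$, followed by $m_k=m_1/m_2=1$ for $k\geq 4$, is clean and complete.

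The paper reaches the same conclusion by a two-stage comparison: first it equates $|\mathfrak{J}_i|$ with $|\mathfrak{J}_j|$ (your $A_1$ against its analogues $A_j$) to force all $|P_i|$ equal to a common $\alpha$, and then separately computes $|\mathfrak{J}_{1,2,3}|$ by inclusion--exclusion and equates it with $|\mathfrak{J}_1|$ to solve $\alpha=2$. Your route is more economical: by comparing $A_1$ directly against the family $\mathfrak{J}_{1,2,k}$ and computing via set differences rather than inclusion--exclusion, you obtain the single multiplicative constraint $m_1=m_2m_k$ that, together with the ordering hypothesis, collapses everything at once. The paper's first step is in a sense redundant in your argument. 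What the paper's approach buys is that each step is conceptually isolated (equal sizes, then size two), whereas your version hides the equal-size fact inside the chain of inequalities; but your bookkeeping is lighter and avoids the explicit inclusion--exclusion.
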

\begin{proof}
	If $|P_i|=2$ for all $i=1,2,\ldots,n$, then $P$ is a Boolean poset. By Theorem~\ref{thm1}, it follows that $\Gamma(P)$ is  well-covered.  
	
	Conversely, suppose that $\Gamma(\mathbf{P})$ is   well-covered,  i.e., all its maximal independent sets have the same cardinality.  
	
	Let $\mathbf{q}_{_1},\mathbf{q}_{_2},\dots, \mathbf{q}_{_n}$ be the atoms of $\mathbf{P}$. For each atom $\mathbf{q}_i$, consider the maximal independent set
	\[
	\mathfrak{J}_i=\{\mathbf{q}_i\}^u\setminus D=\{(x_1,\ldots,x_k)\in \mathbf{P}\mid x_i\neq 0\}\setminus D,
	\]
	where $D$ denotes the set of dense elements of $\mathbf{P}$. Since 
	$
	|D|=(|P_1|-1)(|P_2|-1)\cdots(|P_n|-1),
	$
	we obtain
	$
	|\mathfrak{J}_i|=\bigl(|P_1||P_2|\cdots(|P_i|-1)\cdots |P_n|\bigr)-\bigl((|P_1|-1)(|P_2|-1)\cdots(|P_n|-1)\bigr).
	$
	As $\Gamma(\mathbf{P})$ is well-covered, $|\mathfrak{J}_i|=|\mathfrak{J}_j|$ for all $i\neq j$. This equality simplifies to
	\[
	(|P_i|-1)|P_j|=(|P_j|-1)|P_i| \quad \implies \quad |P_i|=|P_j|.
	\]
	Hence $|P_1|=|P_2|=\dots=|P_n|=\alpha$ (say).  
	
	It remains to show that $\alpha=2$. By  Lemma \ref{MIS}, consider the maximal independent set
	\[ \mathfrak{J}_{1,2,3} \;=\; \Bigl\{\, \mathbf{x} \in \mathbf{P} \;\Bigm|\; \mathbf{x} \in \{\mathbf{q}_1,\mathbf{q}_2\}^u \;\;\text{or}\;\; \mathbf{x} \in \{\mathbf{q}_2,\mathbf{q}_3\}^u \;\;\text{or}\;\; \mathbf{x} \in \{\mathbf{q}_1,\mathbf{q}_3\}^u \Bigr\}\setminus D . \]
	Writing the set $\mathfrak{J}_{1,2,3}=A\cup B\cup C$, where
	$
	A=\{\mathbf{x}\in \mathbf{P}\mid \mathbf{x}\in \{\mathbf{q}_1,\mathbf{q}_2\}^u\}\setminus D,\quad \linebreak
	B=\{\mathbf{x}\in \mathbf{P}\mid \mathbf{x}\in \{\mathbf{q}_2,\mathbf{q}_3\}^u\}\setminus D,\quad \text{and }
	C=\{\mathbf{x}\in \mathbf{P}\mid \mathbf{x}\in \{\mathbf{q}_1,\mathbf{q}_3\}^u\}\setminus D.
	$
	By the principle of Inclusion-Exclusion,
	\[
	|\mathfrak{J}_{1,2,3}|=|A|+|B|+|C|-|A\cap B|-|A\cap C|-|B\cap C|+|A\cap B\cap C|.
	\]
	Note that
	\[
	|A|=|B|=|C|=(\alpha-1)^2\alpha^{n-2}-(\alpha-1)^n,
	\]
	and
	\[
	|A\cap B|=|A\cap C|=|B\cap C|=|A\cap B\cap C|=(\alpha-1)^3\alpha^{n-3}-(\alpha-1)^n.
	\]
	Thus
	\[
	|\mathfrak{J}_{1,2,3}|=3\big((\alpha-1)^2\alpha^{n-2}-(\alpha-1)^n\big)-2\big((\alpha-1)^3\alpha^{n-3}-(\alpha-1)^n\big).
	\]
	
	On the other hand, we know that
	\[
	|\mathfrak{J}_1|=(\alpha-1)\alpha^{n-1}-(\alpha-1)^n.
	\]
	Since, $\Gamma(\mathbf{P})$ is unmixed, $|\mathfrak{J}_{1,2,3}|=|\mathfrak{J}_1|$. Equating the two expressions, we have
	\[
	3(\alpha-1)^2\alpha^{n-2}-2(\alpha-1)^3\alpha^{n-3}=(\alpha-1)\alpha^{n-1}.
	\]
	Dividing through by $\alpha^{n-3}>0$, we obtain
	\[
	3(\alpha-1)\alpha-2(\alpha-1)^2-\alpha^2=0,
	\]
	which simplifies to
	$ 	\alpha-2=0. $
	Thus, $\alpha=2$, and therefore, $|P_1|=|P_2|=\dots=|P_n|=\alpha=2$, completing the proof.
\end{proof}

With this, we give a proof of 
Theorem, \ref{converse thm} that characterizes the Cohen-Macaulayness of the zero-divisor graph $\Gamma(\mathbf{P})$.\\

\noindent\textbf{Proof of Theorem \ref{converse thm} :} 
\begin{proof}
	$(1)\implies (2)$:
	It is well-known that, every Cohen-Macaulay graph is well-covered. 
	
	$(2)\implies (3)$:
	follows from Theorem \ref{well}.
	
	$(3)\implies (4)$:
	Let  $ \mathbf{ P}=\prod\limits_{i=1}^{n}P_i$, $(n\geq 3)$, where $|P_i|=2$ for all $i=1,2,\ldots,n.$ Hence each $P_i$ is a $2$-element  chain. Thus, $\mathbf{P}$  is a Boolean lattice.
	
	$(4)\implies (5)$: 
	Every Boolean lattice is Boolean poset.
	
	$(5)\implies (1)$:
	 From Theorem \ref{CM}, $\Gamma(\mathbf{P})$ is Cohen-Macaulay.
\end{proof}

\noindent\textbf{Funding:}
First author: None.\\
Second author: Supported by Department of Science and Technology, Science and Engineering Research Board, Government of India under the scheme CRG/2022/002184.

\noindent\textbf{Conflict of interest:} The authors declare that there is no conflict of
interests regarding the publishing of this paper.

\noindent\textbf{Authorship Contributions :} Both authors contributed to the study on the Cohen-Macaulayness of the zero-divisor graph of a Boolean poset. Both authors read and approved the final version of the manuscript.

\noindent \textbf{Data Availability Statement :} Data sharing does not apply to this article, as no datasets were generated or analyzed during the current study.

\noindent\textbf{Acknowledgment:} The authors sincerely thank the referee for a meticulous evaluation of the manuscript. The constructive feedback provided was instrumental in improving the final presentation of the paper.

\end{document}